\numberwithin{equation}{section}
\DeclareMathOperator{\Tr}{Tr}
\DeclareMathOperator{\tr}{tr}\usepackage{graphicx}
\newtheorem{teo}{Theorem}[section]
\newtheorem{pro}[teo]{Proposition}
\newtheorem{cor}[teo]{Corollary}
\newtheorem{rem}[teo]{Remark}
\title[Moments]{On star-Moments of the compression of the free unitary Brownian motion by a free projection}
\author[N. Demni]{Nizar Demni}
\address{ Aix-Marseille Universit\'e \\ CNRS\\  Centrale Marseille \\ I2M - UMR 7373\\  39 rue F. Joliot Curie\\  13453 Marseille \\ France }
\email{nizar.demni@univ-amu.fr}
\author[T. Hamdi]{Tarek Hamdi}
\address{Department of Management Information Systems \\ College of Business Management \\ Qassim University \\ Ar Rass \\ Saudi Arabia
and Laboratoire d'Analyse Math\'ematiques et Applications LR11ES11 \\ Universit\'e de Tunis El-Manar \\ Tunisie}
\email{t.hamdi@qu.edu.sa } 
\keywords{Free unitary Brownian motion; Self-adjoint projection; Mixed moments; Alternating moments; Non crossing partitions; Kreweras complement.}
\begin{document}

\maketitle

\begin{abstract}
In this paper, we derive explicit expressions for the moments and for the mixed moments of the compression of a free unitary Brownian motion by a free projection. While the moments of this non-normal operator are readily derived using analytical or combinatorial methods, we only succeeded to derive its mixed ones after solving a non-linear partial differential equation for their two-variables generating function. Nonetheless, the combinatorics of non-crossing partitions  lead to another expression of the lowest-order mixed moment. We shall also give some interest in odd alternating moments. In particular, we derive a linear partial differential equation for their generating function which we solve explicitly when the rank of the projection equals $1/2$. As to the combinatorial approach, it leads in this case to the analysis of Kreweras complements of particular non-crossing partitions. For general ranks, the analytic approach rather provides an implicit solution. 
\end{abstract}


\section{Introduction}

Let $(\mathcal{A}, \tau)$ be a non-commutative $\star$-probability space: $\mathcal{A}$ is a unital von Neumann algebra equipped with a faithful tracial state $\tau$. If $a \in \mathcal{A}$ is a non-normal operator, then one has:
\begin{equation*}
\tau(a^m) = \int z^m \mu_a(dz),  \quad \tau((a^{\star})^m) = \int \overline{z}^m \mu_a(dz),
\end{equation*}
for all $m \geq 0$, where $\mu_a$ is the Brown measure of $a$ (see e.g. \cite{Min-Spe}). However, the equality for 
\begin{equation*}
\tau((a^m)(a^{\star})^n)) = \int z^m \overline{z}^n\mu_a(dz), \quad m, n \geq 0, 
\end{equation*}  
does not necessarily hold unless $a$ is a normal operator. As a matter of fact, computing $\tau((a^m)(a^{\star})^n))$ when $a$ is non normal and more generally 
\begin{equation*}
\tau(p(a,a^{\star})),
\end{equation*}
where $p$ ranges over the space of all polynomials in two non-commuting indeterminates, may turn out to be quite difficult and even to be out of reach. 


Let $P$ be a self-adjoint projection and let $(Y_t)_{t \geq 0}$ be a free unitary Brownian motion, both operators being free (in Voiculescu's sense) in $\mathcal{A}$. Then the compression $PY_tP$ is in general a non-normal operator. 
The following moments: 
\begin{equation*}
\tau((PY_tPY_t^{\star}P)^n), \quad n \geq 0,
\end{equation*}   
in the compressed algebra were already computed in \cite{Dem-Ham} in relation to the free Jacobi process and correspond in the non normal picture to the polynomial:
\begin{equation*}
p(a,a^{\star}) = (aa^{\star})^n, \quad a = PY_tP.
\end{equation*}
We shall refer to them as even alternating moments of $PY_tP$ since the word $(aa^{\star})^n$ is alternating in the alphabet $(a, a^{\star})$ and has $2n$ elements. However, up to our best knowledge, no other $\star$-moments of $PY_tP$ were considered or computed in literature. 

In this paper, we are interested in computing the moments, the mixed moments and the odd alternating moments of $PY_tP$. The two latter moments correspond respectively to polynomials of the form: 
\begin{equation*}
p(a,a^{\star}) = a^m(a^{\star})^n, \quad m,n \geq 1, \quad p(a,a^{\star}) = (aa^{\star})^ma, \quad m \geq 1.
\end{equation*}
This choice is motivated by our guess confirmed in the sequel that the mixed and the odd alternating moments are still attainable though exhibit quite complicated analytic and combinatorial structures. Of course, computing the moments 
\begin{equation*}
\tau((PY_tP)^n) = \tau((PY_t)^n), \quad n \geq 0,
\end{equation*}
is straightforward and we perform it here for sake of completeness. In this respect, we shall prove in three different ways that they coincide, up to dilations, with those of $Y_t$. The first way relies on an application of the free It\^o's formula leading to a partial differential equation (hereafter pde) satisfied by their generating function. Another one appeals to the moment formula for alternating products of free random variables (\cite{Nic-Spe}) together with the free cumulants of $Y_t$ (\cite{Levy}). We shall also determine them as large-size limits of their finite-dimensional counterparts whose expressions follow from T. L\'evy's expansion for observables of the Brownian motion on the group of unitary matrices (\cite{Levy}).  

Coming to the computations of the mixed moments: 
\begin{equation*}
\tau[(PYP)^m(PY^{\star}P)^n] = \tau[(PY)^m(PY^{\star})^n], \quad m,n \geq 1,
\end{equation*}
 we shall use again the free stochastic calculus to derive a pde for their two-variables generating function. Its unique solution (in the space of two-variables analytic functions around $(0,0)$) is then expressed through the moment generating functions of $\tau((PY_t)^n)$ in each variable. Extracting the Taylor coefficients of this solution, the mixed moments are then expressed as linear combinations of product of Laguerre polynomials. Once this expression is obtained, we attempt to reprove it relying on the combinatorics of non crossing partitions and on the knowledge of the mixed cumulants of $(Y_t, Y_t^{\star})$ derived in \cite{DGN}. For instance, the lowest-order mixed moment corresponding to $m \geq 1, n =1,$ may be expressed as a weighted sum of Laguerre polynomials. However, the combinatorial approach becomes rapidly quite involved for higher orders and does not lead to a tractable formula compared with the one obtained using the analytic approach. 

As far as the odd alternating moments of $PY_tP$ are concerned, we also derive a linear pde for their generating function. It turns out that this pde is linear and that its coefficients involve the moment generating function of the free Jacobi process. The last fact is somehow expected since even alternating moments of $PY_tP$ are nothing else but the moments of the free Jacobi process. Using the method of characteristics, we shall solve explicitly the obtained pde in the particular case $\tau(P) = 1/2$. Actually, our analysis reveals in this case that the characteristics are inherited from those governing the dynamics of the spectral distribution of $Y_t$. Moreover, the squared solution admits a product form which does not seem to exist for general ranks $\tau(P) \neq 1/2$. 
More generally, the method of characteristics shows that the solution of this pde may be expressed through the Herglotz transform of some unitary operator, the latter being however implicit. 

We close the paper by discussing the combinatorial approach to these (odd) alternating moments. If $\tau(P) = 1/2$, we are then led to the analysis of the block structure of the Kreweras complement of special partitions. Indeed, all odd free cumulants of $P$ vanish in this case except the first one, so that we are left with non crossing partitions formed by an odd number of singletons and/or even non crossing partitions. However, the block structure of the Kreweras complement of such paritition depends on the position of the singletons there which makes the combinatorial approach very likely less efficient than the analytical one.

The paper is organised as follows. The next section is devoted to the moments of $PY_tP$ while the third one is devoted to its mixed moments. The odd alternating moments are studied in the last section. 
\section{Moments of $PY_t$}
Let $(Y_t)_{t \geq 0}$ be a free unitary Brownian motion in $\mathcal{A}$. This is a free It\^o process solution of the following free stochastic differential equation (\cite{Biane}): 
\begin{equation}\label{SDEFUBM}
dY_{t} = iY_tdX_t-\frac{1}{2}Y_tdt, 
\end{equation}
where $(X_t)_{t \geq 0} \in \mathcal{A} $ is a free additive Brownian motion. It is also the large-size limit of the Brownian motion on the unitary group and its moments are given by (\cite{Biane}): 
\begin{equation*}
\tau(Y_t^n) = \frac{e^{-nt/2}}{n}L_{n-1}^{(1)}(nt) = \frac{e^{-nt/2}}{n}\sum_{r=0}^{n-1}\binom{n}{r+1}\frac{(-nt)^r}{r!} , \quad n \geq 1,
\end{equation*}
where $L_{n-1}^{(1)}$ is the $(n-1)$-th Laguerre polynomial of parameter one. Now, let $P = P^{\star} \in \mathcal{A}$ be a self-adjoint projection with continuous rank $\tau(P) = \alpha \in (0,1]$ and assume that $P$ and $(Y_t)_{t \geq 0}$ are $\star$-free in Voiculescu's sense in $\mathcal{A}$. In this section, we shall write three different proofs of the following result: 
\begin{pro}
For all $n \geq 0, t \geq 0$,
\begin{equation*}
\tau\big[(PY_t )^n \big]=\alpha e^{-n\frac{t}{2}}Q_n(\alpha t), 
\end{equation*}
where $Q_0(t) \equiv 1$ and  $Q_n(t) :=  e^{nt/2}\tau(Y_t^n), n \geq 1$.
\end{pro}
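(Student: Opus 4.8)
The plan is to apply the free It\^o formula directly to the process $a_t := PY_t$, reduce the computation to a recursion in $n$, and then recognise the dilation. Since $P$ does not depend on $t$, equation \eqref{SDEFUBM} yields
\begin{equation*}
da_t = P\,dY_t = i a_t \,dX_t - \tfrac{1}{2} a_t\,dt, \qquad a_t := PY_t,
\end{equation*}
so $a_t$ satisfies an equation of the same shape as $Y_t$. Expanding $d(a_t^n)$ by the free It\^o formula, the first-order drift terms sum to $-\tfrac{n}{2}a_t^n\,dt$, the martingale part $\sum_{i} a_t^{i-1}(i a_t\,dX_t)a_t^{n-i}$ is trace-centred and drops out after applying $\tau$, and the genuinely non-classical contribution comes from pairing two differentials $dX_t$. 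Using the free quadratic-variation rule $dX_t\,c\,dX_t = \tau(c)\,dt$ for adapted $c$, the pairing of the differentials from positions $i<j$ produces $-a_t^{i}\tau(a_t^{j-i})a_t^{n-j}\,dt$; taking $\tau$, noting that the number of pairs with $j-i=k$ equals $n-k$, and writing $m_n(t) := \tau[(PY_t)^n]$, I obtain
\begin{equation*}
\frac{d}{dt}m_n(t) = -\frac{n}{2} m_n(t) - \sum_{k=1}^{n-1}(n-k)\, m_k(t)\, m_{n-k}(t).
\end{equation*}

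Next I would set $g_n(t) := e^{nt/2}m_n(t)$ to absorb the linear term, which turns the system into the autonomous quadratic recursion $g_n'(t) = -\sum_{k=1}^{n-1}(n-k)g_k(t)g_{n-k}(t)$; equivalently the generating function $G(t,z) := \sum_{n\geq 1} g_n(t)z^n$ solves the first-order pde $\partial_t G + z\,G\,\partial_z G = 0$. Because $Y_0 = 1$ and $P$ is a projection, $m_n(0) = \tau[P^n] = \alpha$, so $g_n(0) = \alpha$ for every $n\geq 1$. Carrying out the identical It\^o computation with $P$ replaced by $1$ shows that $Q_n(t) = e^{nt/2}\tau(Y_t^n)$ obeys the very same recursion, now with $Q_n(0) = 1$. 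The recursion determines $g_n$ uniquely from $g_1,\dots,g_{n-1}$ together with its initial value, and the ansatz $g_n(t) = \alpha\,Q_n(\alpha t)$ is consistent with it: differentiating produces a factor $\alpha^2$, the quadratic right-hand side also scales by $\alpha^2$, and $\alpha Q_n(0) = \alpha = g_n(0)$, while the base case $g_1 \equiv \alpha = \alpha Q_1(\alpha\cdot)$ holds since $g_1$ is constant. An induction on $n$ then gives $g_n(t) = \alpha Q_n(\alpha t)$, that is $m_n(t) = \alpha e^{-nt/2}Q_n(\alpha t)$.

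I expect the only delicate point to be the bookkeeping in the free It\^o expansion: isolating the $dX_t\,c\,dX_t$ pairings among the many second-order terms, keeping track of the surviving word $a_t^{i+n-j}$, and verifying the multiplicity $n-k$. Once the recursion is established the rest is a routine induction and a scaling check. As an independent verification I would run the combinatorial route: by traciality $\tau[(PY_t)^n] = \tau[Y_t P Y_t P\cdots Y_t P]$, and the moment formula for alternating products of free elements rewrites this as $\sum_{\pi\in NC(n)}\kappa_\pi[Y_t]\,\alpha^{|K(\pi)|}$, using $\tau(P^k)=\alpha$ on each block of the Kreweras complement $K(\pi)$. The identity $|\pi|+|K(\pi)| = n+1$ extracts one global factor $\alpha$, and the homogeneity $\kappa_m(Y_t) = e^{-mt/2}\gamma_m t^{m-1}$ of L\'evy's free cumulants converts $\alpha^{|V|-1}\kappa_{|V|}(Y_t)$ into $e^{-|V|(1-\alpha)t/2}\kappa_{|V|}(Y_{\alpha t})$ on each block $V$; re-summing the moment--cumulant formula recovers $\alpha e^{-n(1-\alpha)t/2}\tau(Y_{\alpha t}^n) = \alpha e^{-nt/2}Q_n(\alpha t)$, matching the analytic answer.
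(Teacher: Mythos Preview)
Your proof is correct and follows essentially the same route as the paper: the analytic argument via the free It\^o formula and the resulting Burgers-type recursion/PDE is exactly the paper's first proof (the paper works with $A_t=e^{t/2}PY_t$ from the start, you introduce the exponential factor afterwards, but the system $g_n'=-\sum (n-k)g_kg_{n-k}$ and the scaling identification $g_n(t)=\alpha Q_n(\alpha t)$ are the same). Your combinatorial verification is likewise the paper's second proof, with the pleasant streamlining that you use the homogeneity $\kappa_m(Y_t)=e^{-mt/2}\gamma_m t^{m-1}$ to recognise $\alpha^{|V|-1}\kappa_{|V|}(Y_t)=e^{-|V|(1-\alpha)t/2}\kappa_{|V|}(Y_{\alpha t})$ directly, whereas the paper expands everything and matches Laguerre coefficients via an auxiliary identity.
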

\begin{proof}
{\it First proof}: Using \eqref{SDEFUBM}, the process
\begin{equation*}
A_t := e^{t/2}PY_t, 
\end{equation*}
satisfies the free stochastic differential equation:
\begin{equation*}
dA_t = iA_tdX_t.
\end{equation*}
Now, a straightforward application of the free It\^o formula yields (see e.g. \cite{De}, Proposition 6.1): 
\begin{align*}
 dA_t^n=i\sum_{k=1}^nA_t^kdX_tA_t^{n-k}-\sum_{k=1}^{n-1}kA_t^k\tau(A_t^{n-k})dt, \quad n \geq 1.
 \end{align*}
Taking the expectation with respect to $\tau$ in both sides of this last equality, we obtain the following ordinary differential equation (ode): 
\begin{align*}
 \frac{d}{dt}\tau(A_t^n)=-\sum_{k=1}^{n-1}k\tau(A_t^k)\tau(A_t^{n-k}).
 \end{align*}
Equivalently, the moment generating function 
\begin{equation*}
\psi(t,z) := \sum_{n\geq1}\tau(A_t^n)z^n
\end{equation*}
satisfies the pde:
\begin{equation*}
\partial_t\psi+z\psi\partial_z\psi=0,\quad \psi(0,z)=\frac{\alpha z}{1-z}.
\end{equation*}
But, it is known from \cite{Biane} that:
\begin{equation*}
\eta(t,z) :=\sum_{n\geq1}Q_n(t)z^n
\end{equation*}
is the unique solution of
\begin{equation}\label{PDEUBM}
\partial_t\eta+z\eta\partial_z\eta=0,\quad \eta(0,z)=\frac{ z}{1-z}, 
\end{equation}
which is analytic in the open unit disc. Consequently, $\psi(t,z)=\alpha\eta(\alpha t,z)$ and the first proof is completed. 

{\it Second proof}: 
For $n \geq 1$, let $k_n(Y_t)$ be the $n$-th free cumulant of $Y_t$ and for a given non-crossing partition $\pi$ with blocks $V_1, \dots, V_r$, recall the cumulant and the moment multiplicative functionals (\cite{Nic-Spe}, Lecture 11): 
\begin{eqnarray*}
k_{\pi}[Y_t,...,Y_t] &:=&   \prod_{i=1}^rk_{|V_i|}(Y_t), \\ 
\tau_{K(\pi)}[P,...,P]  &:=&   \prod_{i=1}^r\tau\left(P^{|V_i|}\right).
 \end{eqnarray*}
Then, the moment formula for alternating products of free random variables reads (\cite{Nic-Spe}, Theorem 14.4):
\begin{align*}
\tau\big[(PY_t )^n \big]=\sum_{\pi\in NC(n)} k_{\pi}[Y_t,...,Y_t]\tau_{K(\pi)}[P,...,P],
\end{align*}
where $K(\pi)$ is the Kreweras complement of $\pi$ (\cite{Nic-Spe}, Definition 9.21). Since $P^k=P$ for all $k\geq1$ and since $|K(\pi)|=n+1-|\pi|$, then 
\begin{align*}
\tau_{K(\pi)}[P,...,P] = \alpha^{|K(\pi)|} = \alpha^{n+1 - |\pi|}.
\end{align*}
Furthermore, the free cumulants of $Y_t$ are given by (\cite{Levy}):
\begin{align*}
k_{n}(Y_t)=e^{-nt/2}\frac{(-n)^{n-1}}{n!}t^{n-1},\quad n \geq 1, t \geq 0.
\end{align*}
As a result,  
\begin{align*}
\tau\big[(PY_t )^n \big]&=\sum_{r=1}^n\sum_{\pi=(V_1,...,V_r)\in NC(n)} \prod_{i=1}^rk_{|V_i|}(Y_t)\alpha^{n+1-r}
\\&=\sum_{r=1}^n\alpha^{n+1-r}\sum_{\pi=(V_1,...,V_r)\in NC(n)} \prod_{i=1}^r e^{-|V_i|\frac{t}{2}}\frac{(-|V_i|)^{|V_i|-1}}{|V_i|!}t^{|V_i|-1}
\\&=\alpha e^{-n\frac{t}{2}}\sum_{r=1}^n(-\alpha t)^{n-r}\sum_{\pi=(V_1,...,V_r)\in NC(n)} \prod_{i=1}^r \frac{|V_i|^{|V_i|-1}}{|V_i|!}
\\&=\alpha e^{-nt/2}\sum_{r=0}^{n-1}(-\alpha t)^{r}\sum_{\pi=(V_1,...,V_{n-r})\in NC(n)} \prod_{i=1}^{n-r} \frac{|V_i|^{|V_i|-1}}{|V_i|!}.
\end{align*}
Hence, the second proof is complete provided that the following formula holds:
\begin{equation}\label{binom}
\sum_{\pi=(V_1,...,V_{n-r})\in NC(n)} \prod_{i=1}^{n-r} \frac{|V_i|^{|V_i|-1}}{|V_i|!}=\frac{n^{r-1}}{r!}\binom{n}{r+1}, \quad n\geq1,r\in\{0,...n-1\}. 
\end{equation}
But the latter follows immediately by equating the coefficients of $t$ in the moment-cumulant formula:
\begin{align*}
Q_n(t)=e^{nt/2}\tau\big[Y_t ^n \big]=\sum_{r=0}^{n-1}(- t)^{r}\sum_{\pi=(V_1,...,V_{n-r})\in NC(n)} \prod_{i=1}^{n-r} \frac{|V_i|^{|V_i|-1}}{|V_i|!}.
\end{align*}

{\it Third proof}: Let $(Y_t^N)_{t\geq0}$ be a standard Brownian motion on the unitary group $\mathcal{U}_N$ and let $P^N$ be a $N\times N$ orthogonal projection such that: 
\begin{equation*}
\lim_{N \rightarrow \infty} \tr_N(P^N) = \alpha, 
\end{equation*}
where $\tr_N = (1/N)\Tr$ is the normalized trace on the space of complex $N \times N$ matrices. Then, $P^N$ and $(Y_{t/N}^N)_{t\geq0}$ are asymptotically free as $N \rightarrow \infty$ and converge respectively (in the sense of noncommutative moments) to $P$ and $(Y_t)_{t \geq 0}$ (see \cite{Biane}). 
 
Now, let $\sigma$ be an element of the symmetric group $\mathfrak{S}_n$ and set:
\begin{equation*}
p_{\sigma}(P^NY_{\frac{t}{N}}^N ) :=\prod_{c=(i_1,...,i_r)\ {\rm cycle\ of}\ \sigma}\tr_N\left[(P^NY_{\frac{t}{N}}^N )^r\right].
\end{equation*}
Then, the expansion proved in \cite[Theorem 3.3]{Levy} reads:
\begin{equation}\label{mom}
\mathbb{E}\big[p_{\sigma}(P^NY_{\frac{t}{N}}^N ) \big]=e^{-nt/2}\sum_{k,d=0}^{\infty}\frac{(-1)^kt^k}{k!N^{2d}}\tr_N\left[(P^N)^{n-|\sigma|+k-2d}\right]S(\sigma,k,d),
\end{equation}
where $S(\sigma,k,d)$ is the number of paths starting at $\sigma$ in the Cayley graph of $\mathfrak{S}_n$, of length $k$ and defect $d$, and $|\sigma|$ is the length of $\sigma$ with respect to the set of transpositions (see \cite{Levy} for further details).
In particular, if $\sigma = (1\dots n)$ then, 
\begin{equation*}
\mathbb{E}\big[\mathop{tr_N} (P^NY_{\frac{t}{N}}^N )^n \big]= e^{-n\frac{t}{2}}\sum_{k,d=0}^{\infty}\frac{(-1)^kt^k}{k!N^{2d}}\tr_N\left[(P^N)^{k+1-2d}\right]S((1...n),k,d), 
\end{equation*}
which together with Proposition 6.6. in \cite{Levy} imply:
\begin{align*}
\lim_{N\rightarrow\infty}\mathbb{E}\big[\mathop{tr_N}(P^NY_{\frac{t}{N}}^N )^n \big] & = e^{-n\frac{t}{2}}\sum_{k=0}^{n-1}\frac{(-1)^kt^k}{k!}\alpha^{k+1}S((1...n),k,0) 
\\& = e^{-n\frac{t}{2}}\sum_{k=0}^{n-1}\frac{(-1)^kt^k}{k!}\alpha^{k+1} \binom{n}{k+1}n^{k-1}= \alpha Q_n(\alpha t). 
\end{align*}
\end{proof}

\section{Mixed moments}
Now, we proceed to the computation of the mixed moments: 
\begin{equation*}
R^{(\alpha)}_{m,n}(t)=e^{\frac{t(m+n)}{2}}\tau[(PY_t)^m(PY_t^{\star})^n], \quad m, n \geq 0, 
\end{equation*}
where we set $R^{(\alpha)}_{0,0}(t):= \tau(P) = \alpha$\footnote{This choice of the initial value ensures that $R^{(\alpha)}_{m,n}(0) = \alpha$ for all $(m,n)$.}. Obviously, $R^{(\alpha)}_{m,n}(t)$ has the following properties:
\begin{itemize}
    \item $R^{(\alpha)}_{m,n}(t)=R^{(\alpha)}_{n,m}(t)$ for all $m,n\in\mathbb{N}$.
    \item $R^{(\alpha)}_{m,0}(t)=\alpha Q_m(\alpha t)$ for all $m\in\mathbb{N}^*$.
    \item $R^{(1)}_{m,n}(t)= Q_{|m-n|}( t)$.
     \item $R^{(\alpha)}_{m,n}(0)=\alpha$.
\end{itemize}
In particular, 
\begin{align*}
R^{(\alpha)}_{1,0}(t)=e^{t/2}\tau(PY_t)=e^{t/2}\tau(P)\tau(Y_t)=\alpha,
\end{align*}
and using the freeness definition, we readily derive: 
\begin{align*}
R^{(\alpha)}_{1,1}(t)=e^{t}\tau(PY_tPY_t^{\star})=\alpha^2 e^{t}+\alpha(1-\alpha).
\end{align*}
More generally, $t \mapsto R^{(\alpha)}_{m,n}(t)$ satisfies the following ode.
\begin{pro}\label{ode}
 For all $n,m\geq1$ and any $t > 0$, we have
\begin{multline*}
\frac{d}{dt}R^{(\alpha)}_{m,n}(t) = -\alpha\sum_{k=1}^{n-1}kR^{(\alpha)}_{m,k}(t)Q_{n-k}(\alpha t)-\alpha\sum_{k=1}^{m-1}kR^{(\alpha)}_{k,n}(t)Q_{m-k}(\alpha t)
\\ +e^t\sum_{ k=0}^{ m-1}\sum_{j=0}^{n-1} R^{(\alpha)}_{k,j}(t) R^{(\alpha)}_{m-1-k,n-1-j}(t),
\end{multline*}
where an empty sum is zero. 
 \end{pro}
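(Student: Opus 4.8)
The plan is to reproduce the free stochastic calculus argument of the first proof of the previous Proposition, now applied to the two-letter word encoding $R^{(\alpha)}_{m,n}$. I would set $A_t := e^{t/2}PY_t$ and $B_t := e^{t/2}PY_t^\star$, so that $R^{(\alpha)}_{m,n}(t) = \tau(A_t^m B_t^n)$. Taking the adjoint of \eqref{SDEFUBM} gives $dY_t^\star = -i\,dX_t\,Y_t^\star - \tfrac12 Y_t^\star\,dt$, and a direct computation shows that in each case the $\pm t/2$ deterministic drift cancels, leaving the purely martingale equations
\[
dA_t = iA_t\,dX_t, \qquad dB_t = -iP\,dX_t\,\widetilde Y_t, \quad \widetilde Y_t := e^{t/2}Y_t^\star .
\]
Each differential has the schematic form $L\,dX_t\,R$: for $A_t$ one has $(L,R)=(iA_t,1)$, for $B_t$ one has $(L,R)=(-iP,\widetilde Y_t)$. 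I would also record the three structural identities driving every simplification below: $P^2=P$, the unitarity $Y_tY_t^\star=1$, and $B_t=P\widetilde Y_t$.

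Next I would apply the free It\^o product rule to $d(A_t^mB_t^n)$, viewing $A_t^mB_t^n$ as a word $C_1\cdots C_{m+n}$ with $C_1,\dots,C_m=A_t$ and $C_{m+1},\dots,C_{m+n}=B_t$. After applying $\tau$, the first-order terms, each carrying a single factor $dX_t$, vanish by the martingale property of the free It\^o integral (adaptedness of the integrands together with $\tau(dX_t)=0$). The surviving $dt$-contributions come from the pairwise contractions of two differentials: for positions $i<j$ carrying $L_i\,dX_t\,R_i$ and $L_j\,dX_t\,R_j$, the rule $dX_t\,Z\,dX_t=\tau(Z)\,dt$ traces out the word between the two Brownian factors and produces
\[
C_1\cdots C_{i-1}\,L_i\,\tau\!\big(R_i\,C_{i+1}\cdots C_{j-1}\,L_j\big)\,R_j\,C_{j+1}\cdots C_{m+n}\,dt .
\]
This is exactly the mechanism behind the two sums in the free It\^o formula recalled in the first proof of the previous Proposition, so I would first sanity-check it against that case. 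I then split the pairs $(i,j)$ into three families: both in the $A$-block, both in the $B$-block, and one in each.

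For the two \emph{diagonal} families I expect to recover the two single sums in the statement. When both indices lie in the $A$-block the traced middle word is a power of $A_t$, and $\tau(A_t^k)=\alpha Q_k(\alpha t)$ by the previous Proposition; collecting the $(m-k)$ pairs with $j-i=k$ and reindexing yields $-\alpha\sum_{k=1}^{m-1}kR^{(\alpha)}_{k,n}(t)Q_{m-k}(\alpha t)$. The $B$-block case is analogous: using $B_t=P\widetilde Y_t$ and cyclicity, the traced middle collapses to $\tau\big((\widetilde Y_tP)^k\big)=\alpha Q_k(\alpha t)$ (here the reality of $Q_k$ and the identity $\tau((PY_t^\star)^k)=\overline{\tau((PY_t)^k)}$ are used), giving $-\alpha\sum_{k=1}^{n-1}kR^{(\alpha)}_{m,k}(t)Q_{n-k}(\alpha t)$.

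The \emph{crossed} family, with $i$ in the $A$-block and $j=m+b$ in the $B$-block, is where the genuine work and the factor $e^t$ appear, so I regard it as the main obstacle. Here the contraction leaves the scalar $\tau\big(A_t^{m-i}B_t^{b-1}P\big)=R^{(\alpha)}_{m-i,b-1}(t)$ (after absorbing the stray $P$ via $P^2=P$ and cyclicity) multiplying the operator $A_t^{i}\widetilde Y_tB_t^{n-b}$. The key simplification is the unitarity collapse $A_t^{i}\widetilde Y_t=e^{t}A_t^{i-1}P$, coming from $Y_tY_t^\star=1$, after which $\tau\big(A_t^i\widetilde Y_tB_t^{n-b}\big)=e^tR^{(\alpha)}_{i-1,n-b}(t)$. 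Each crossed pair thus contributes $e^tR^{(\alpha)}_{m-i,b-1}(t)R^{(\alpha)}_{i-1,n-b}(t)$, and summing over $1\le i\le m$, $1\le b\le n$ and reindexing $k=i-1$, $j=b-1$ produces exactly $e^t\sum_{k=0}^{m-1}\sum_{j=0}^{n-1}R^{(\alpha)}_{k,j}(t)R^{(\alpha)}_{m-1-k,n-1-j}(t)$. Adding the three contributions gives the announced expression for $\tfrac{d}{dt}\tau(A_t^mB_t^n)=\tfrac{d}{dt}R^{(\alpha)}_{m,n}(t)$; the bookkeeping of which stray $P$'s survive each contraction, and the careful treatment of the boundary cases where one of the two surviving $R$-indices is zero (so that the corresponding trace reduces to $\tau(P)=\alpha=R^{(\alpha)}_{0,0}$), are the points demanding the most care.
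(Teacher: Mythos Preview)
Your proposal is correct and follows essentially the same free It\^o calculus argument as the paper: the paper organizes the computation via the product rule $d(A_t^mB_t^n)=d(A_t^m)B_t^n+A_t^md(B_t^n)+(dA_t^m)(dB_t^n)$ and the known expansions of $dA_t^m$, $dB_t^n$, whereas you expand the full word $C_1\cdots C_{m+n}$ directly and split the pairwise contractions into the three families, but the resulting terms and simplifications (the $P^2=P$ absorption, the unitarity collapse $A_t\widetilde Y_t=e^tP$, and the identifications $\tau(A_t^k)=\tau(B_t^k)=\alpha Q_k(\alpha t)$) are identical.
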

\begin{proof}
Recall the notation $A_t := e^{t/2}PY_t$ and introduce similarly $B_t := e^{t/2}PY_t^{\star}$ so that $R^{(\alpha)}_{m,n}(t) = \tau(A_t^mB_t^n)$. Then,
\begin{equation*}
d\big[A_t^m B_t^n \big]=d\big[A_t^m \big] B_t^n+A_t^m d\big[B_t^n \big]+\big(dA_t^m\big)\big(dB_t^n \big),
\end{equation*}
where $\big(dA_t^m\big)\big(dB_t^n \big)$ is the bracket of the free semi-martingales $A^m$ and $B^m$ at time $t$, and 
\begin{eqnarray*}
dA_t^m & = & i\sum_{k=1}^mA_t^kdX_tA_t^{m-k}-\sum_{k=1}^{m-1}kA_t^k\tau(A_t^{m-k})dt,  \\ 
dB_t^n & = & -ie^{t/2}\sum_{k=1}^nB_t^{n-k}PdX_tY_t^{\star}B_t^{k-1}-\sum_{k=1}^{n-1}kB_t^k\tau(B_t^{n-k})dt.  
\end{eqnarray*}
As a result, one gets:
\begin{align*}
d\big[A_t^m B_t^n \big]=&i\sum_{k=1}^mA_t^kdX_tA_t^{m-k}B_t^n-\sum_{k=1}^{m-1}kA_t^kB_t^n\tau(A_t^{m-k})dt
\\&-ie^{t/2}\sum_{k=1}^nA_t^mB_t^{n-k}PdX_tY_t^{\star}B_t^{k-1}-\sum_{k=1}^{n-1}kA_t^mB_t^k\tau(B_t^{n-k})dt
\\&+e^{t/2}\left( \sum_{k=1}^nA_t^kdX_tA_t^{m-k}\right)\left(\sum_{k=1}^nB_t^{n-k}PdX_tY_t^{\star}B_t^{k-1}\right).
\end{align*} 
But, the last semi-martingale bracket is explicitly computed as: 
\begin{align*}
  e^{t/2}\sum_{ k=1}^{m}\sum_{j=1}^{n}A_t^kY_t^{\star}B_t^{j-1}\tau\big(A_t^{m-k} B_t^{n-j}P \big)dt&= e^{t}\sum_{ k=1}^{ m}\sum_{j=1}^{n}A_t^{k-1}PB_t^{j-1}\tau\big(A_t^{m-k} B_t^{n-j}P \big)dt
  \\&= e^{t}\sum_{ k=0}^{ m-1}\sum_{j=0}^{n-1}A_t^kB_t^{j}\tau\big(A_t^{m-1-k} B_t^{n-1-j} \big)dt.
\end{align*}
Consequently
\begin{align*}
\frac{d}{dt} R^{(\alpha)}_{m,n}(t) = \frac{d}{dt}\tau\big(A_t^n B_t^m \big)=&-\sum_{k=1}^{m-1}k\tau(A_t^kB_t^n)\tau(A_t^{m-k})-\sum_{k=1}^{n-1}k\tau(A_t^mB_t^k)\tau(B_t^{n-k})
\\&+e^t\sum_{ k=0}^{ m-1}\sum_{j=0}^{n-1}\tau(A_t^{k}B_t^{j})\tau\big(A_t^{m-1-k} B_t^{n-1-j} \big).
\end{align*}
Finally, since $\tau(A_t^{m-k}) = \alpha Q_{m-k}(\alpha t)$ and likewise $\tau(B_t^{n-k}) = \alpha Q_{n-k}(\alpha t)$, the proposition follows. 
\end{proof}
Now, we turn the previous ode to a pde for the following moment generating function:
 \begin{equation*}
     \phi^{(\alpha)}(t,y,z)=\sum_{m,n\geq0}R^{(\alpha)}_{m,n}(t)y^mz^n= \alpha + \alpha \eta(\alpha t,y)+\alpha \eta(\alpha t,z)+\sum_{m,n\geq1}R^{(\alpha)}_{m,n}(t)y^mz^n,
 \end{equation*}
 which converges absolutely for any fixed time $t \geq 0$ in some neighborhood of $(0,0)$ and satisfies $\phi^{(\alpha)}(t,0,0) = \alpha$. More precisely, one has
\begin{cor}
 For all $\alpha\in(0,1]$,
 \begin{equation*}
\partial_t\phi^{(\alpha)}+\alpha y\eta(\alpha t,y)\partial_y\phi^{(\alpha)}+\alpha z\eta(\alpha t,z)\partial_z\phi^{(\alpha)}=e^tyz\phi^{(\alpha)}\big(t,y, z\big)^2.
 \end{equation*}
where we recall that 
\begin{equation*}
\eta(t,z) :=\sum_{n\geq1}Q_n(t)z^n
\end{equation*}
\end{cor}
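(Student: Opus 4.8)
The plan is to convert the scalar ode of Proposition~\ref{ode} into the announced pde by multiplying it by $y^mz^n$, summing over $m,n\geq 1$, and recognizing each resulting double series as a differential or algebraic operation on $\phi^{(\alpha)}$. First I would split the time derivative according to the decomposition
\begin{equation*}
\phi^{(\alpha)}(t,y,z) = \alpha + \alpha\eta(\alpha t,y) + \alpha\eta(\alpha t,z) + \sum_{m,n\geq 1}R^{(\alpha)}_{m,n}(t)y^mz^n,
\end{equation*}
which is consistent with the boundary values $R^{(\alpha)}_{m,0}(t)=\alpha Q_m(\alpha t)$ recorded before the proposition. Differentiating the two boundary terms in $t$ produces, via the chain rule, the contributions $\alpha^2(\partial_t\eta)(\alpha t,y)$ and $\alpha^2(\partial_t\eta)(\alpha t,z)$, where $\partial_t\eta$ denotes the derivative of $\eta$ in its first slot; the remaining piece of $\partial_t\phi^{(\alpha)}$ is the genuine double sum $\sum_{m,n\geq 1}\frac{d}{dt}R^{(\alpha)}_{m,n}(t)y^mz^n$, which is precisely what Proposition~\ref{ode} evaluates.

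Next I would process the three groups of terms on the right-hand side of the ode. In the first single sum the change of index $\ell=n-k$ factorizes it as a Cauchy product of $\sum_{m,k\geq 1}kR^{(\alpha)}_{m,k}(t)y^mz^k$ with $\sum_{\ell\geq 1}Q_\ell(\alpha t)z^\ell=\eta(\alpha t,z)$; the first factor equals $z\partial_z$ applied to the double sum, which I would rewrite as $z\partial_z\phi^{(\alpha)}$ minus the boundary piece $\alpha z(\partial_z\eta)(\alpha t,z)$. This delivers the transport term $-\alpha z\eta(\alpha t,z)\partial_z\phi^{(\alpha)}$ together with a correction $+\alpha^2z\eta(\alpha t,z)(\partial_z\eta)(\alpha t,z)$. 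The second single sum is treated symmetrically in $(y,m)$, giving $-\alpha y\eta(\alpha t,y)\partial_y\phi^{(\alpha)}$ plus the analogous correction. The double sum is the pleasant simplification: shifting $m\mapsto m-1$, $n\mapsto n-1$ exhibits it as $yz$ times a two-dimensional Cauchy product, hence as $e^tyz\bigl(\phi^{(\alpha)}\bigr)^2$, the square involving the \emph{full} generating function since the summation ranges include the boundary indices.

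Collecting everything, I would arrive at the stated pde up to the two residual brackets $\alpha^2\bigl[(\partial_t\eta)(\alpha t,y)+y\eta(\alpha t,y)(\partial_z\eta)(\alpha t,y)\bigr]$ and its $z$-counterpart. The decisive observation is that each bracket vanishes identically, being exactly the left-hand side of the pde~\eqref{PDEUBM} satisfied by $\eta$, evaluated at $(\alpha t,y)$ respectively $(\alpha t,z)$; this is what makes the correction terms created by the reindexing disappear and leaves precisely the asserted identity. The only real obstacle is analytic rather than algebraic: one must justify the term-by-term differentiation in $t$ and the rearrangement of the double series, which is legitimate on the neighborhood of $(0,0)$ where $\phi^{(\alpha)}$ converges absolutely for fixed $t$. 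Once that convergence is granted, all the steps above are formal identities between power series and the corollary follows at once.
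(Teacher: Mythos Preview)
Your argument is correct and is exactly the derivation the paper leaves implicit: the corollary is stated there without proof as a direct consequence of the ode in Proposition~\ref{ode}, and your reindexing of the three sums together with the cancellation coming from the pde~\eqref{PDEUBM} for $\eta$ is precisely the expected computation. The only cosmetic point is the notation $(\partial_z\eta)(\alpha t,y)$ in your residual bracket, which you clearly intend as the derivative of $\eta$ in its second slot evaluated at $(\alpha t,y)$; writing it as $(\partial_2\eta)(\alpha t,y)$ would avoid ambiguity.
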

Note that if $y=0$, then we recover the pde satisfied by:  
\begin{equation*}
\phi^{(\alpha)}(t,z,0) = \alpha(1+\eta(\alpha t,z)).
\end{equation*}
Moreover, the function
\begin{equation*}
f^{(\alpha)}(t,y,z) :=\frac{-1}{\phi^{(\alpha)}(t,y,z)},
\end{equation*}
is well defined for any fixed time $t > 0$ in a neighborhood of $(0,0)$ and satisfies the linear pde:
\begin{equation}\label{ode}
\partial_tf^{(\alpha)}+\alpha y\eta(\alpha t,y)\partial_yf^{(\alpha)}+\alpha z\eta(\alpha t,z)\partial_zf^{(\alpha)}=e^tyz.
 \end{equation}
Since $\eta(t,\cdot)$ is a bi-holomorphic map from the an open disc centered at the origin onto some Jordan domain with inverse given there by (\cite{Biane1}): 
\begin{equation*}
\eta^{-1}( t,y)=\frac{y}{1+y}e^{ t y},
\end{equation*}
then we can set 
\begin{equation*}
g^{(\alpha)}(t,y,z) := f^{(\alpha)}(t,\frac{ye^{\alpha t y}}{1+y},\frac{ze^{\alpha t z}}{1+z})
\end{equation*}
or equivalently: 
\begin{equation*}
f^{(\alpha)}(t,y,z)=g^{(\alpha)}(t,\eta(\alpha t,y),\eta(\alpha t,z)).
\end{equation*}
In this respect, we shall prove that: 
\begin{pro}
The function $g^{(\alpha)}$ admits the following expression:
\begin{equation*}
g^{(\alpha)}(t,y,z)=\frac{yz(e^{t(1+\alpha y+\alpha z)}-1)}{(1+\alpha y+\alpha z)(1+y)(1+z)}-\frac{1}{\alpha(1+y)(1+z)}.
\end{equation*}
\end{pro}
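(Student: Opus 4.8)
The plan is to exploit the fact that the substitution defining $g^{(\alpha)}$ was tailored so as to absorb the convective part of the linear pde \eqref{ode} for $f^{(\alpha)}$. Writing the defining relation in the equivalent form $f^{(\alpha)}(t,y,z)=g^{(\alpha)}(t,\eta(\alpha t,y),\eta(\alpha t,z))$ and differentiating, I expect the two transport terms $\alpha y\eta(\alpha t,y)\partial_y f^{(\alpha)}$ and $\alpha z\eta(\alpha t,z)\partial_z f^{(\alpha)}$ to cancel exactly against the contributions coming from the $t$-dependence of the inner maps $\eta(\alpha t,\cdot)$, leaving only a bare time derivative of $g^{(\alpha)}$. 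This reduces the problem to integrating an elementary ordinary differential equation in $t$.

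First I would carry out the chain rule. Setting $u=\eta(\alpha t,y)$ and $v=\eta(\alpha t,z)$, one has $\partial_y f^{(\alpha)}=(\partial_u g^{(\alpha)})\,\partial_y\eta(\alpha t,y)$, $\partial_z f^{(\alpha)}=(\partial_v g^{(\alpha)})\,\partial_z\eta(\alpha t,z)$, and $\partial_t f^{(\alpha)}=\partial_t g^{(\alpha)}+(\partial_u g^{(\alpha)})\,\partial_t\eta(\alpha t,y)+(\partial_v g^{(\alpha)})\,\partial_t\eta(\alpha t,z)$. The crucial input is the pde \eqref{PDEUBM} satisfied by $\eta$, which yields $\partial_t\eta(\alpha t,y)=-\alpha y\,\eta(\alpha t,y)\,\partial_y\eta(\alpha t,y)$ and likewise in $z$. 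Substituting into the linear pde \eqref{ode} for $f^{(\alpha)}$, the term $(\partial_u g^{(\alpha)})\,\partial_t\eta(\alpha t,y)$ is precisely the negative of $\alpha y\eta(\alpha t,y)\partial_y f^{(\alpha)}$, and the analogous cancellation occurs in the $z$-variable. Hence \eqref{ode} collapses to $\partial_t g^{(\alpha)}(t,u,v)=e^t yz$, where $y$ and $z$ must now be re-expressed through the inverse map: since $u=\eta(\alpha t,y)$ one has $y=\eta^{-1}(\alpha t,u)=ue^{\alpha t u}/(1+u)$, and similarly for $z$. Renaming the free variables, this gives
\begin{equation*}
\partial_t g^{(\alpha)}(t,y,z)=\frac{yz}{(1+y)(1+z)}\,e^{t(1+\alpha y+\alpha z)}.
\end{equation*}

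The right-hand side is explicit and $t$-integrable, so it remains to fix the initial datum and integrate. From $R^{(\alpha)}_{m,n}(0)=\alpha$ for all $m,n$ one obtains $\phi^{(\alpha)}(0,y,z)=\alpha/[(1-y)(1-z)]$, hence $f^{(\alpha)}(0,y,z)=-(1-y)(1-z)/\alpha$; evaluating at the inverse map at $t=0$, where $\eta^{-1}(0,y)=y/(1+y)$, gives $g^{(\alpha)}(0,y,z)=-1/[\alpha(1+y)(1+z)]$, which is exactly the second summand of the claimed formula. Integrating the displayed ode from $0$ to $t$,
\begin{equation*}
g^{(\alpha)}(t,y,z)-g^{(\alpha)}(0,y,z)=\frac{yz}{(1+y)(1+z)}\int_0^t e^{s(1+\alpha y+\alpha z)}\,ds=\frac{yz\big(e^{t(1+\alpha y+\alpha z)}-1\big)}{(1+\alpha y+\alpha z)(1+y)(1+z)},
\end{equation*}
and adding back the initial value reproduces the asserted expression.

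The main obstacle is the bookkeeping in the chain-rule cancellation: one must keep track of the two roles of the time variable (the explicit $t$ in $g^{(\alpha)}$ and the $t$ hidden inside $u=\eta(\alpha t,y)$, $v=\eta(\alpha t,z)$) and correctly invoke the Burgers-type pde \eqref{PDEUBM} for $\eta$ in the form $\partial_t\eta(\alpha t,y)=-\alpha y\eta(\alpha t,y)\partial_y\eta(\alpha t,y)$. Once this cancellation is secured and the right-hand side $e^tyz$ is pushed through the inverse map $\eta^{-1}(\alpha t,\cdot)$, the remaining integration and the evaluation of the initial condition are routine.
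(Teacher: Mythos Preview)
Your proposal is correct and follows essentially the same approach as the paper's own proof: both apply the chain rule to $f^{(\alpha)}(t,y,z)=g^{(\alpha)}(t,\eta(\alpha t,y),\eta(\alpha t,z))$, invoke the Burgers-type relation $\partial_t\eta=-y\eta\partial_y\eta$ from \eqref{PDEUBM} to cancel the transport terms, and then integrate the resulting ordinary differential equation $\partial_t g^{(\alpha)}(t,y,z)=\frac{yz}{(1+y)(1+z)}e^{t(1+\alpha y+\alpha z)}$ from the initial value $g^{(\alpha)}(0,y,z)=-1/[\alpha(1+y)(1+z)]$. Your write-up is in fact slightly more explicit than the paper's in justifying the initial datum via $\phi^{(\alpha)}(0,y,z)=\alpha/[(1-y)(1-z)]$ and in spelling out the push-through of $e^tyz$ by the inverse map $\eta^{-1}(\alpha t,\cdot)$.
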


\begin{proof}
We readily compute:
\begin{align*}
\partial_t f^{(\alpha)}(t,y,z)&=\partial_t g^{(\alpha)}+\alpha \partial_t \eta\partial_y g^{(\alpha)}+\alpha \partial_t \eta\partial_z g^{(\alpha)},
\\ \partial_y f^{(\alpha)}(t,y,z)&=\partial_y \eta\partial_y g^{(\alpha)},
\\\partial_z f^{(\alpha)}(t,y,z)&=\partial_z \eta\partial_z g^{(\alpha)}.
\end{align*}
Substituting these partial derivatives in \eqref{ode} and using the fact that $\partial_t \eta=-y\eta\partial_y \eta$, we get the following ode:
\begin{equation*}
     \partial_tg^{(\alpha)}(t,\eta(\alpha t,y),\eta(\alpha t,z) )=e^tyz.
 \end{equation*}
 in some neighborhood of $(0,0)$. Equivalently,
\begin{equation*}
     \partial_tg^{(\alpha)}(t,y,z )=\frac{yz}{(1+y)(1+z)}e^{t(1+\alpha y+\alpha z)},
 \end{equation*}
 near $(0,0)$. Integrating the last ode and taking into account the initial value at $t=0$:
\begin{equation*}
g^{(\alpha)}(0,y,z)=f^{(\alpha)}(0,\frac{y}{1+y},\frac{z}{1+z})=\frac{-1}{\alpha(1+y)(1+z)},
\end{equation*}
we get the sought expression. 
\end{proof}

With these findings, we can now state and prove the main result of this section: 
\begin{teo}
Define $(c_{j,k}(t, \alpha))_{j, k \geq 0}$ by: 
\begin{equation*}
-\frac{1}{g^{(\alpha)}(t,y,z)} =  \sum_{j,k \geq 0} c_{j,k}(t, \alpha) y^j z^k, 
\end{equation*}
in a neighborhood of $(0,0)$. Then for any $m,n \geq 1$, 
\begin{equation*}
R_{m,n}(t) = \frac{1}{m n}\left\{\sum_{j=1}^m\sum_{k=1}^n (jk) c_{j,k}(t, \alpha)L_{m-j}^{(j)} (m \alpha t)L_{n-k}^{(k)} (n \alpha t)\right\}. 
\end{equation*}
where $L_{n-k}^{(k)}$ is the $(n-k)$-th Laguerre polynomial of parameter $k$. 
\end{teo}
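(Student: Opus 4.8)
The plan is to invert the substitution that defines $g^{(\alpha)}$ and thereby read off $\phi^{(\alpha)}$ directly as a double power series in the two Laguerre-type building blocks $\eta(\alpha t, y)$ and $\eta(\alpha t, z)$. Indeed, combining $f^{(\alpha)} = -1/\phi^{(\alpha)}$ with the identity $f^{(\alpha)}(t,y,z) = g^{(\alpha)}(t, \eta(\alpha t, y), \eta(\alpha t, z))$ established above gives
\[
\phi^{(\alpha)}(t,y,z) = \frac{-1}{g^{(\alpha)}(t, \eta(\alpha t, y), \eta(\alpha t, z))}.
\]
By the very definition of the coefficients $c_{j,k}(t,\alpha)$, together with the fact that $\eta(\alpha t, \cdot)$ vanishes at the origin (so that the composition is a legitimate operation on power series, absolutely convergent near $(0,0)$), this becomes
\[
\phi^{(\alpha)}(t,y,z) = \sum_{j,k \geq 0} c_{j,k}(t,\alpha)\, \eta(\alpha t, y)^j\, \eta(\alpha t, z)^k.
\]
Since $R_{m,n}(t)$ is precisely the coefficient of $y^m z^n$ on the left-hand side, the problem reduces to extracting $[y^m]\, \eta(\alpha t, y)^j$ for each $j$.

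The crux of the argument is the following closed form for these coefficients, which I would obtain by Lagrange inversion. Writing $w = \eta(\alpha t, y)$, the bi-holomorphy recalled above gives the implicit equation $y = \eta^{-1}(\alpha t, w) = \frac{w}{1+w} e^{\alpha t w}$, equivalently $w = y\, \phi(w)$ with $\phi(w) = (1+w) e^{-\alpha t w}$ and $\phi(0) = 1 \neq 0$. Applying the Lagrange inversion formula with $H(w) = w^j$ yields
\[
[y^m]\, \eta(\alpha t, y)^j = \frac{1}{m}\, [w^{m-1}]\big( j\, w^{j-1}\, \phi(w)^m \big) = \frac{j}{m}\, [w^{m-j}]\, (1+w)^m e^{-m\alpha t w}.
\]
Expanding the binomial and the exponential and comparing with the explicit series $L_{m-j}^{(j)}(x) = \sum_{q=0}^{m-j} \binom{m}{m-j-q} \frac{(-x)^q}{q!}$ for the Laguerre polynomial of degree $m-j$ and parameter $j$, I would identify the last coefficient as $L_{m-j}^{(j)}(m\alpha t)$, hence
\[
[y^m]\, \eta(\alpha t, y)^j = \frac{j}{m}\, L_{m-j}^{(j)}(m\alpha t).
\]
In particular this coefficient vanishes for $j > m$, and also for $j=0$ when $m \geq 1$, which is what will truncate the summation.

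To finish, I would substitute this identity and its counterpart in the variable $z$ into the series for $\phi^{(\alpha)}$ and extract the coefficient of $y^m z^n$. For $m, n \geq 1$ only the indices $1 \leq j \leq m$ and $1 \leq k \leq n$ survive, giving
\[
R_{m,n}(t) = \sum_{j=1}^m \sum_{k=1}^n c_{j,k}(t,\alpha)\, \frac{j}{m} L_{m-j}^{(j)}(m\alpha t)\, \frac{k}{n} L_{n-k}^{(k)}(n\alpha t),
\]
which is exactly the claimed formula after factoring out $1/(mn)$. I expect the main obstacle to be the Lagrange inversion step: one must correctly recast the inverse $\eta^{-1}$ in the form $w = y\,\phi(w)$, track the shifted parameters $m\alpha t$ and $j$ produced by the exponent $m$, and recognize the resulting finite sum as a Laguerre polynomial of the right degree and parameter. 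The remaining bookkeeping, namely the legitimacy of composing and multiplying the absolutely convergent series near $(0,0)$ and the vanishing that truncates the double sum, is routine.
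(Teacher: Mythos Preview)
Your proof is correct and follows essentially the same route as the paper: both invert the substitution to write $\phi^{(\alpha)}(t,y,z)=-1/g^{(\alpha)}(t,\eta(\alpha t,y),\eta(\alpha t,z))$, expand in powers of $\eta$, and use the identity $[y^m]\,\eta(\alpha t,y)^j=\frac{j}{m}L_{m-j}^{(j)}(m\alpha t)$ to extract coefficients. The only cosmetic difference is that the paper quotes this Laguerre identity from \cite{Dem}, whereas you supply a self-contained derivation via Lagrange inversion applied to $\eta^{-1}(\alpha t,w)=\frac{w}{1+w}e^{\alpha t w}$; your handling of the $j=0$ and $k=0$ terms (they contribute nothing to $[y^mz^n]$ for $m,n\geq 1$) is also slightly more implicit than the paper's, which separates them out as $\alpha+\alpha\eta(\alpha t,y)+\alpha\eta(\alpha t,z)$.
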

\begin{proof}
Keeping in mind the relations between $g^{(\alpha)}, f^{(\alpha)},$ and $\phi^{(\alpha)}$, it follows that for any time $t > 0$, 
\begin{equation*}
\phi^{(\alpha)}(t,y,z) =  \alpha +\alpha \eta(\alpha t, y) + \alpha \eta(\alpha t, z) + \sum_{j,k \geq 1} c_{j,k}(t, \alpha) [\eta(\alpha t, y)]^j [\eta(\alpha t, z)]^k, 
\end{equation*}
near $(0,0)$. But, for any $j, k \geq 1$, the following identity holds (\cite{Dem}):
\begin{equation*}
[\eta(\alpha t, y)]^j = j \sum_{m \geq j} L_{m-j}^{(j)} (m \alpha t)\frac{y^m}{m},
\end{equation*}
and likewise 
\begin{equation*}
[\eta(\alpha t, z)]^k = k \sum_{n \geq k} L_{n-k}^{(k)} (n \alpha t)\frac{z^n}{n}. 
\end{equation*}
Consequently, 
\begin{multline*}
\phi^{(\alpha)}(t,y,z) =  \alpha+\alpha \eta(\alpha t, y) + \alpha \eta(\alpha t, z)  \\ + \sum_{m,n \geq 1} \left\{\sum_{j=1}^m\sum_{k=1}^n (jk) c_{j,k}(t, \alpha)L_{m-j}^{(j)} (m \alpha t)L_{n-k}^{(k)} (n \alpha t)\right\} \frac{y^m}{m}\frac{z^n}{n}, 
\end{multline*}
whence the corollary follows. 
\end{proof}

\begin{rem}
We can compute explicitly the Taylor coefficients $c_{j,k}(t, \alpha)$ using the geometric series and the binomial Theorem. Indeed, for small enough $|y|, |z|$, we expand: 
\begin{align*}
-\frac{1}{g^{(\alpha)}(t,y,z)} & = \frac{1+\alpha y+\alpha z+ \alpha yz}{1-\displaystyle \frac{yz(1+\alpha y + \alpha z + \alpha yz)[e^{t(1+\alpha y+\alpha z)}-1]}{(1+\alpha y + \alpha z)(1+y)(1+z)}} 
\\& = (1+\alpha y+\alpha z+ \alpha yz)\sum_{q \geq 0} \frac{(yz)^q}{[(1+y)(1+z)]^q}[e^{t(1+\alpha y+\alpha z)}-1]^q\left(1+\frac{\alpha y z}{1+\alpha y+\alpha z}\right)^q
\\& = (1+\alpha y+\alpha z+ \alpha yz)\sum_{q \geq 0}\sum_{l,j=0}^q (-1)^{q-j}e^{jt}\alpha^l\binom{q}{j}\binom{q}{l} \frac{(yz)^{q+l}}{[(1+y)(1+z)]^q} \frac{ e^{jt\alpha(y +z)}}{(1+\alpha y+\alpha z)^l}.
\end{align*}
\end{rem}

\subsection{A Combinatorial approach}
It would be interesting to prove the previous corollary by making use of the moment formula for alternating products of free random variables. Indeed, we can expand:
\begin{align}\label{MCF}
R_{m,n}^{(\alpha)}(t) &= e^{(n+m)t/2}\sum_{\pi\in NC(n+m)}\tau_{K(\pi)}[P,...,P] k_{\pi}[\underbrace{Y_t, \dots ,Y_t}_m, \underbrace{Y_t^{\star}, \dots Y_t^{\star}}_n] \nonumber
\\&= e^{(n+m)t/2} \sum_{\pi\in NC(n+m)}\alpha^{n+m+1-|\pi|}  k_{\pi}[Y_t, \dots ,Y_t, Y_t^{\star}, \dots Y_t^{\star}],
\end{align}
and use the explicit expression of the mixed-cumulants:
\begin{equation*}
k_{j+s}[\underbrace{Y_t, \dots ,Y_t}_j, \underbrace{Y_t^{\star}, \dots Y_t^{\star}}_s], \quad j,s \geq 0,
\end{equation*}
proved in \cite{DGN}, Theorem 4.4. However, the computations are tedious even for small values of $n$ (see below) and we shall only consider the lowest-order mixed moment corresponding to $m\geq 1,  n=1,$ for which we prove: 
\begin{pro}
For any $m \geq 1$, 
\begin{equation*}
R_{m,1}^{(\alpha)}(t) =  e^{t}\sum_{r=1}^{m+1}(-\alpha e^{t/2})^{r}(v_{r-1}(t)-e^{-t}v_r(t)) \sum_{\substack{i_1+\ldots +i_r= m+1-r \\ i_1 \geq 0, \dots, i_r \geq 0}}Q_{i_1}(\alpha t)\ldots Q_{i_r}(\alpha t).
\end{equation*}
where 
\begin{equation*}
v_r(t) :=\sum_{j=0}^{r-2}\binom{r-2}{j}\frac{(r-1)^jt^j}{(r-1)\ldots (r-j)}, r \geq 2, \quad v_1(t) \equiv 1, v_0(t) \equiv 0.
\end{equation*}
Moreover, the inner sum may be further expressed as a weighted sum of Laguerre polynomials: for any $1 \leq r \leq m$: 
\begin{equation*}
\sum_{\substack{i_1+\ldots +i_r= m+1-r \\ i_1 \geq 0, \dots, i_r \geq 0}} Q_{i_1}(\alpha t)\ldots Q_{i_r}(\alpha t) = \frac{1}{m+1-r} \sum_{j=1}^{r \wedge (m+1-r)} j \binom{r}{j}L_{m+1-r-j}^{(j)}(\alpha (m+1-r)t),
\end{equation*} 
while it obviously reduces to one when $r= m+1$. 
\end{pro}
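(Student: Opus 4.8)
The plan is to start from the moment--cumulant expansion \eqref{MCF} specialised to $n=1$, namely
\[
R_{m,1}^{(\alpha)}(t) = e^{(m+1)t/2}\sum_{\pi\in NC(m+1)}\alpha^{\,m+2-|\pi|}\, k_{\pi}[Y_t,\dots,Y_t,Y_t^{\star}],
\]
where the word carries the single $Y_t^{\star}$ in the last position $m+1$. The organising idea is to split each $\pi\in NC(m+1)$ according to the block $B$ that contains position $m+1$; this is the unique block whose cumulant is genuinely mixed, every other block being a pure $Y_t$-block. If $|B|=r$, so that $B$ carries $r-1$ copies of $Y_t$ together with the $Y_t^{\star}$, then $B$ cuts the circle $\{1,\dots,m+1\}$ into $r$ arcs, and by non-crossingness the restriction of $\pi$ to each arc is an arbitrary non-crossing partition into pure $Y_t$-blocks.

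First I would factor $k_{\pi}$ over blocks: the block $B$ contributes the mixed cumulant $k_r[Y_t,\dots,Y_t,Y_t^{\star}]$ (with $r-1$ copies of $Y_t$), while each arc $a$ contributes $\sum_{\rho\in NC(|a|)}\prod_{V\in\rho}k_{|V|}(Y_t)$. The one delicate point is the weight $\alpha^{m+2-|\pi|}$: writing $|\pi|=1+\sum_a|\rho_a|$ one checks that $\alpha^{m+2-|\pi|}=\alpha^{r}\prod_a\prod_{V\in\rho_a}\alpha^{|V|-1}$, so the $\alpha$-weight distributes as $\alpha^r$ on $B$ and one factor $\alpha^{|V|-1}$ on each pure block. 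Absorbing these powers into $k_{|V|}(Y_t)$ and invoking the moment--cumulant formula defining $Q_{|a|}$, each arc of size $|a|$ collapses to $e^{-|a|t/2}Q_{|a|}(\alpha t)$. Since the arc sizes form an arbitrary composition $(i_1,\dots,i_r)$ of $m+1-r$, summing over all blocks $B$ of fixed size $r$ produces exactly the inner sum $\sum_{i_1+\dots+i_r=m+1-r}Q_{i_1}(\alpha t)\cdots Q_{i_r}(\alpha t)$. Collecting the exponentials $e^{(m+1)t/2}e^{-(m+1-r)t/2}=e^{rt/2}$ leaves
\[
R_{m,1}^{(\alpha)}(t)=\sum_{r=1}^{m+1}\alpha^{r}e^{rt/2}\,k_r[Y_t,\dots,Y_t,Y_t^{\star}]\sum_{i_1+\dots+i_r=m+1-r}Q_{i_1}(\alpha t)\cdots Q_{i_r}(\alpha t).
\]

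The remaining and genuinely hard step is to evaluate the mixed cumulant $k_r[Y_t,\dots,Y_t,Y_t^{\star}]$ in closed form. Here I would invoke the explicit mixed cumulants of $(Y_t,Y_t^{\star})$ from \cite[Theorem 4.4]{DGN} and specialise to a single conjugate entry, the aim being to recognise this cumulant as a signed combination of the two polynomials $v_{r-1}$ and $v_r$; reinstating the prefactor $\alpha^r e^{rt/2}$ then recovers the weight of $v_{r-1}$ and $v_r$ displayed in the statement. I expect this identification to be the main obstacle: DGN's formula is not phrased in terms of the $v_r$, so one must isolate the correct sub-sum, control the alternating signs produced by the Kreweras complement, and carefully track the accompanying exponential factors in $t$, verifying en route the binomial form of $v_r$. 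The low-order values $k_1=\tau(Y_t^{\star})=e^{-t/2}$ and $k_2[Y_t,Y_t^{\star}]=\tau(Y_tY_t^{\star})-\tau(Y_t)\tau(Y_t^{\star})=1-e^{-t}$ are a reliable guide for pinning the exact constants.

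Finally, the reformulation of the inner sum as a weighted sum of Laguerre polynomials is a clean, independent computation by generating functions. Since $\sum_{i\geq0}Q_i(\alpha t)z^i=1+\eta(\alpha t,z)$, the inner sum for fixed $r$ is the coefficient of $z^{m+1-r}$ in $\bigl(1+\eta(\alpha t,z)\bigr)^r$. Expanding by the binomial theorem and inserting the identity $[\eta(\alpha t,z)]^{j}=j\sum_{n\geq j}L_{n-j}^{(j)}(n\alpha t)z^{n}/n$ from \cite{Dem} gives, for $N=m+1-r\geq1$,
\[
[z^{N}]\bigl(1+\eta(\alpha t,z)\bigr)^r=\frac{1}{N}\sum_{j=1}^{r\wedge N}j\binom{r}{j}L_{N-j}^{(j)}(N\alpha t),
\]
which is precisely the asserted formula, the case $r=m+1$ reducing to $1$ since then $N=0$.
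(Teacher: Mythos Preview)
Your proposal is correct and follows essentially the same route as the paper: condition on the block containing the unique $Y_t^{\star}$, factor the remaining arcs into pure $Y_t$ contributions that collapse to $Q_{i_k}(\alpha t)$, and then expand $(1+\eta(\alpha t,z))^r$ via the identity $[\eta]^{j}=j\sum_{n\ge j}L_{n-j}^{(j)}(n\alpha t)z^n/n$ from \cite{Dem} to obtain the Laguerre form of the inner sum. The only point where you are too cautious is the mixed cumulant: the formula $k_r[Y_t^{\star},Y_t,\dots,Y_t]=(-e^{-t/2})^{r-2}(v_{r-1}(t)-e^{-t}v_r(t))$ is stated verbatim in \cite[Remark~4.5]{DGN}, so no reorganisation of their Theorem~4.4 is needed---the paper simply cites it, and your ``main obstacle'' disappears.
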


\begin{proof}
Specialize the moment formula \eqref{MCF} to $n=1$ and fix there the block $V_0$ of $\pi$ which contains the element $1$. Then, the trace property allows to rewrite $R_{m,1}^{(\alpha)}(t)$ as:
\begin{align*}
R_{m,1}^{(\alpha)}(t)&=e^{(m+1)t/2} \sum_{r=1}^{m+1}\sum_{|V_0|=r}\sum_{\pi=(V_0,...)\in NC(m+1)}\alpha^{m+2-|\pi|} k_{\pi}[Y_t^{\star},Y_t,...,Y_t].
\end{align*}
Moreover, if $V_0=(1=s_1,s_2,...,s_r)$ then $\pi$ decomposes as $V_0\cup \pi_1\cup...\cup \pi_r$ where $\pi_i\in NC(s_{i}+1,s_{i}+2,...,s_{i+1}-1)$ with the convention $s_{r+1} := m+2$. 
Hence, the cumulant functional $k_{\pi}$ factorizes as
\begin{equation*}
k_{\pi} = k_{r}[Y_t^{\star},\underbrace{Y_t,...,Y_t}_{r-1}]k_{\pi_1}(Y_t)...k_{\pi_r}(Y_t)
\end{equation*} 
and (see \cite[Remark 4.5]{DGN}):
\begin{equation*}
 k_{r}[Y_t^{\star},\underbrace{Y_t,...,Y_t}_{r-1}] =(-e^{-t/2})^{r-2}(v_{r-1}(t)-e^{-t}v_r(t)), \quad r \geq 1.
\end{equation*}

As a result,
\begin{multline*}
R_{m,1}^{(\alpha)}(t)=  e^{(m+1)t/2}\sum_{r=1}^{m+1}(-e^{-t/2})^{r-2}(v_{r-1}(t)-e^{-t}v_r(t))
 \sum_{\substack{\pi=V_0\cup \pi_1\cup...\cup \pi_r\in NC(m+1) \\ V_0=(1,s_2,...,s_r)}}\alpha^{m+2-|\pi|}k_{\pi_1}...k_{\pi_r}.
\end{multline*} 
 But if $i_k=s_{k+1}-s_k-1\in\{0,\ldots,m+1-r\}, 1 \leq k \leq r$, then $i_1+\ldots +i_r+r=m+1$ and 
\begin{align*}
R_{m,1}^{(\alpha)}(t)=& e^{(m+1)t/2}\sum_{r=1}^{m+1}(-e^{-t/2})^{r-2}(v_{r-1}(t)-e^{-t}v_r(t))
\\& \times \sum_{1<s_2<...<s_r\leq m+1}\left(\sum_{\pi_1\in NC(2,...,s_{2}-1)}\alpha^{i_1+1-|\pi_1|}k_{\pi_1}\right)\ldots\left(\sum_{\pi_r\in NC(s_r+1,...,m+1)}\alpha^{i_r+1-|\pi_r|}k_{\pi_r}\right)
\\& = e^{(m+1)t/2})\sum_{r=1}^{m+1}(-e^{-t/2})^{r-2}(v_{r-1}(t)-e^{-t}v_r(t))
 \sum_{i_1+\ldots +i_r+r= m+1}\tau[(PY_t)^{i_1}]\ldots\tau[(PY_t)^{i_r}],
\\& = e^{t}\sum_{r=1}^{m+1}(-\alpha e^{t/2})^{r}(v_{r-1}(t)-e^{-t}v_r(t)) \sum_{i_1+\ldots +i_r= m+1-r}Q_{i_1}(\alpha t)\ldots Q_{i_r}(\alpha t).
\end{align*}
Finally, write
\begin{equation*}
\sum_{i_1+\ldots +i_r= m+1-r}Q_{i_1}(\alpha t)\ldots Q_{i_r}(\alpha t) =\sum_{j_1+\ldots +j_r= m+1}Q_{j_1-1}(\alpha t)\ldots Q_{j_r-1}(\alpha t)
\end{equation*} 
and consider the series
\begin{equation*}
U(t,z):=\sum_{k\geq1}Q_{k-1}(t)z^k = \sum_{k\geq0}Q_{k}(t)z^{k+1}=z(1+\eta(t,z)).
\end{equation*}
On the one hand, we have
\begin{align*}
(U(t,z))^r &=\sum_{k\geq r}\left(\sum_{j_1+\ldots +j_r= k}Q_{j_1-1}(\alpha t)\ldots Q_{j_r-1}(\alpha t)\right)z^k.
\end{align*}
On the other hand, Lemma 4.2 in \cite{Dem} implies that:
\begin{align*}
(U(t,z))^r&=z^r(1+\eta(t,z))^r = z^r\sum_{j=0}^r\binom{r}{j}(\eta(t,z))^j
\\&= z^r + \sum_{j=1}^r j \binom{r}{j}\sum_{k\geq j}\frac{1}{k}L^{(j)}_{k-j}(kt)z^{k+r}
\\&= z^r + \sum_{k\geq 1}\sum_{j=1}^{r \wedge k} j\binom{r}{j}\frac{1}{k}L^{(j)}_{k-j}(kt)z^{k+r}
\\&=z^r + \sum_{k\geq r+1}\frac{1}{k-r}\sum_{j=1}^{r \wedge (k-r)}j\binom{r}{j}L^{(j)}_{k-r-j}((k-r)t)z^{k}.
\end{align*}
Equating the Taylor coefficients of both expansions of $(U(t,z))^r$ and substituting $t \rightarrow \alpha t$, we are done.  
\end{proof}

\begin{rem}
Adapting the previous proof to $R_{m,2}^{(\alpha)}(t)$ and keeping the same notations, we get: 
\begin{align}\label{sum}
R_{m,2}^{(\alpha)}(t)=&e^{(m+2)t/2}\sum_{r=1}^{m+2}k_r
\sum_{1<s_2<...<s_r\leq m+2}m_{i_1}\ldots m_{i_r},
\end{align}
where
\begin{equation*}
 m_{i_l}=\left(\sum_{\pi_l\in NC(s_l+1,...,s_{l+1}-1)}\alpha^{i_l+1-|\pi_l|}k_{\pi_l}\right),
\end{equation*}
and $k_r$ is the mixed cumulant corresponding to the block $V_0$.
Decomposing further the sum \eqref{sum} according to $s_2 = 2$ or $s_2 > 2$), we arrive at:
 \begin{align*}
R_{m,2}^{(\alpha)}(t)=&e^{(m+2)t/2}\sum_{r=2}^{m+2}k_{r}[Y_t^{\star}, Y_t^{\star}, \underbrace{Y_t,...,Y_t}_{r-2}] \sum_{2<s_3<...<s_r\leq m+2}m_{i_2}\ldots m_{i_r}
 \\&+e^{(m+2)t/2}\sum_{r=1}^{m+2}k_{r}[Y_t^{\star}, \underbrace{Y_t,...,Y_t}_{r-1}]  \sum_{2<s_2<...<s_r\leq m+2}m_{i_1}\ldots m_{i_r}
\\=&e^{(m+2)t/2}\sum_{r=2}^{m+2}k_{r}[Y_t^{\star}, Y_t^{\star}, \underbrace{Y_t,...,Y_t}_{r-2}] \sum_{i_2+\ldots +i_r+r= m+2}\tau[(PY_t)^{i_2}]\ldots\tau[(PY_t)^{i_r}]
\\&+e^{(m+2)t/2}\sum_{r=1}^{m+2}k_{r}[Y_t^{\star}, \underbrace{Y_t,...,Y_t}_{r-1}] \sum_{i_1+\ldots +i_r+r= m+2}\tau[PY_t^*(PY_t)^{i_1-1}]\tau[(PY_t)^{i_2}]\ldots\tau[(PY_t)^{i_r}].
\end{align*}
All the terms appearing in the last equality admit explicit expressions but they clearly lead to a cumbersome expression of $R_{m,2}^{(\alpha)}(t)$. A similar, yet more complicated, formula may be derived along the same lines for $R_{m,n}^{(\alpha)}(t)$.
 \end{rem}

\section{Odd alternating moments} 
Recall that the even alternating moments of $PY_tP$: 
\begin{equation*}
r_0^{(\alpha)}(t) := \alpha, \quad r_n^{(\alpha)}(t) := \tau((PY_tPY_t^{\star})^n), \qquad n \geq 1, 
\end{equation*}
coincide up to a normalization with the moments of the free Jacobi process associated with a single projection, which were determined in \cite{Dem-Ham} (see Corollary 3 there). 
Recall also from \cite{DHH} that the sequence $(r_n^{(\alpha)}(t))_{n \geq 1}$ satisfies (see the proof of Proposition 1 there):  
\begin{equation}\label{RR1}
\frac{dr_n^{(\alpha)}}{dt}(t)  = - n r_{n}^{(\alpha)}(t) + n\alpha r_{n-1}^{(\alpha)}(t) + n \sum_{q=0}^{n-2} r_{n-q-1}^{(\alpha)}(t) (r_{q}^{(\alpha)}(t) - r_{q+1}^{(\alpha)}(t)) \quad n \geq 1, 
\end{equation}
where an empty sum is zero. 
In this section, we shall consider their odd counterparts: 
\begin{equation*}
s_{n,1}^{(\alpha)}(t) = \tau(\underbrace{PY_tPY_t^{\star} \cdots PY_t}_{2(2n+1) \,\textrm{terms}}) = e^{-(2n+1)t/2}\tau(\underbrace{A_tB_t\cdots A_tB_tA_t}_{2n+1 \, \textrm{terms}}), \quad n \geq 2.
\end{equation*}
In particular, we readily derive
\begin{eqnarray*}
r_1^{(\alpha)}(t) & = & \alpha^2 + \alpha(1-\alpha) e^{-t}, \\
r_2^{(\alpha)}(t) &= & \alpha(1-\alpha)[(1-3\alpha+3\alpha^2) - 2\alpha(1-\alpha)t] e^{-2t}+ 4\alpha^2(1-\alpha)^2e^{-t} + \alpha^3(2-\alpha),
\end{eqnarray*}
whence we get:
\begin{eqnarray*}
e^{t/2} s_{0,1}(t) & = &\alpha  \\ 
e^{t/2}s_{1,1}^{(\alpha)}(t) & = & \alpha[\alpha(2-\alpha)+ (1-\alpha)(1-\alpha - \alpha t) e^{-t}], \\
e^{t/2}s_{2,1}^{(\alpha)}(t) &= & \alpha \left\{\alpha^2(5-6\alpha+2\alpha^2) +\alpha(1-\alpha)((1-\alpha) (6-7 \alpha) -(4-3\alpha) \alpha t)e^{-t}\right. \\ 
 & + &\left. (1-\alpha)[(1-\alpha) \left(5 \alpha^2-4 \alpha+1\right)- (4-9\alpha+6\alpha^2)\alpha t + \frac{5}{2}(1-\alpha)(\alpha t)^2]e^{-2t}\right\}.
\end{eqnarray*}

In the following proposition, we derive the analogue of \eqref{RR1} for $s_{n,1}^{(\alpha)}(t)$: 
\begin{pro}\label{ODE}
For any $n \geq 1$, 
\begin{multline*}
\frac{ds_{n,1}^{(\alpha)}}{dt}(t)  = -\frac{2n+1}{2}s_{n,1}^{(\alpha)}(t) - \sum_{q=1}^n(2n-2q+1) s_{n-q,1}^{(\alpha)}(t)r_{q}^{(\alpha)}(t)
\\ + \sum_{q=1}^ n (2n-2q+2) s_{n-q,1}^{(\alpha)}(t)r_{q-1}^{(\alpha)}(t).
\end{multline*}
\end{pro}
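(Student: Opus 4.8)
The plan is to mimic the derivation of Proposition~\ref{ode} (the mixed-moment ODE) by applying the free It\^o formula to the alternating word $A_tB_tA_tB_t\cdots A_t$ of odd length $2n+1$, where $A_t = e^{t/2}PY_t$ and $B_t = e^{t/2}PY_t^{\star}$ satisfy $dA_t = iA_tdX_t$ and $dB_t = -ie^{t/2}PdX_tY_t^{\star}$. Writing $W_t := A_tB_t\cdots A_t$ (with $2n+1$ factors so that $e^{t/2}s_{n,1}^{(\alpha)}(t) = \tau(W_t)$, and keeping track of the extra $e^{-(2n+1)t/2}$ normalization), the strategy is to differentiate the product by the free It\^o rule, collecting a first-order drift term coming from the single-factor drifts $dA_t$, $dB_t$ and a quadratic-covariation term coming from all pairwise brackets $(dA_t)(dB_s)$, $(dA_t)(dA_s)$, etc. Since only $X_t$-differentials contribute to the bracket and $A,B$ share the same driving free Brownian motion, the nonzero brackets are exactly those between an $A$-factor and a $B$-factor; each such bracket produces a trace factor that, by the trace property, closes up into a shorter alternating word together with an even alternating word $r_q^{(\alpha)}$.

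First I would record the individual It\^o differentials and then expand $d(A_tB_t\cdots A_t) = \sum d(\text{single factor}) + \sum \text{brackets}$, taking $\tau$ of both sides; the martingale parts $\int \cdots dX_t \cdots$ vanish under $\tau$, leaving an ODE for $\tau(W_t)$. The drift from the single-factor differentials contributes the $-\tfrac{2n+1}{2}s_{n,1}^{(\alpha)}$ term (each of the $2n+1$ factors contributes a $-\tfrac12\,dt$ from the $-\tfrac12 Y_t\,dt$ part of \eqref{SDEFUBM}, i.e.\ $s_{n,1}$ is $e^{-(2n+1)t/2}$ times $\tau(W_t)$, and the normalization accounts for the factor). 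The heart of the computation is the bracket term: pairing the $X_t$-differential from factor $A$ at position $a$ with that from factor $B$ at position $b$ produces, via the free It\^o product rule and the trace, a factor $\tau$ of an even alternating block (an $r_q^{(\alpha)}$) multiplying the trace of the remaining odd alternating word (an $s_{n-q,1}^{(\alpha)}$). Carefully cataloguing which pairings $(a,b)$ leave a block of $2q$ letters inside versus outside, and summing over the two orientations (which yields the two sums with coefficients $2n-2q+1$ and $2n-2q+2$), gives the two bracket sums in the statement.

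The main obstacle I anticipate is the bookkeeping of the bracket contributions: in an odd alternating word the positions of the $A$- and $B$-factors are not symmetric, so I must carefully identify, for each admissible pair of differentials, the length and parity of the ``pinched'' cyclic sub-word that becomes an even alternating moment $r_q^{(\alpha)}$ and the complementary sub-word that becomes $s_{n-q,1}^{(\alpha)}$. Getting the exact multiplicities $(2n-2q+1)$ and $(2n-2q+2)$ right—rather than an off-by-one or a doubled count—will require a precise indexing of the $2n+1$ positions and a clean use of the trace property to cyclically rotate the shorter word into canonical form. I would organize this by fixing the first pinched factor and summing over the second, exactly as the proof of Proposition~\ref{ode} fixed the block containing the element $1$, and I would double-check the final coefficients against the already-computed low-order values $e^{t/2}s_{1,1}^{(\alpha)}$ and $e^{t/2}s_{2,1}^{(\alpha)}$ to validate the formula.
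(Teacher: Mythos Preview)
Your overall strategy is the right one and matches the paper's approach: apply the free It\^o product rule (the paper invokes it via \cite[Theorem~3.3]{BenL}) to the length-$(2n+1)$ alternating word in $A_t,B_t$, take $\tau$, and count. The $-\tfrac{2n+1}{2}$ term does come purely from the normalization $s_{n,1}^{(\alpha)}=e^{-(2n+1)t/2}\tau(W_t)$, since $dA_t=iA_t\,dX_t$ and $dB_t=-ie^{t/2}P\,dX_tY_t^{\star}$ have no finite-variation part.

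There is, however, a genuine gap in your bracket analysis. You assert that ``the nonzero brackets are exactly those between an $A$-factor and a $B$-factor.'' This is false: $A_t$ and $B_t$ are driven by the \emph{same} free additive Brownian motion $X_t$, so the free It\^o rule $dX_tM\,dX_t=\tau(M)\,dt$ makes \emph{every} pair of differentials contribute, including $A$--$A$ and $B$--$B$ pairs. In the paper's bookkeeping the relevant dichotomy is the \emph{parity of $l-k$}, not the types of the factors. Pairs with $l-k=2q$ (same parity: either $A$--$A$ or $B$--$B$) produce the contribution
\[
-\,\tau(\underbrace{A_tB_t\cdots A_t}_{2n-2q+1})\,\tau(\underbrace{A_tB_t\cdots A_tB_t}_{2q}),
\]
and there are exactly $(n-q+1)+(n-q)=2n-2q+1$ such pairs, yielding the first sum $-\sum_{q=1}^n(2n-2q+1)s_{n-q,1}^{(\alpha)}r_q^{(\alpha)}$. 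Pairs with $l-k=2q-1$ (opposite parity: $A$--$B$ or $B$--$A$) give the $+e^t$ contribution and there are $2(n-q+1)=2n-2q+2$ of them, yielding the second sum. If you drop the same-parity brackets as you propose, you lose the entire first sum and the ODE is wrong; the ``two orientations'' of $A$--$B$ pairings both fall into the opposite-parity case and cannot by themselves produce the coefficient $2n-2q+1$. Fix this by organizing the count by $l-k$ even versus odd, and the rest of your plan goes through.
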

\begin{proof}
From \cite{BenL}, Theorem 3.3, we infer that for any collection of continuous free It\^o processes: 
\begin{equation*}
dE_j(t) = E_{j,1}(t)dX_tE_{j,2}(t), \quad 1 \leq j \leq 2n+1, 
\end{equation*}
with respect to the same free additive Brownian motion $X$, the following holds: 
\begin{multline*}
\frac{d}{dt} \tau(E_1\cdots E_{2n+1})(t) = \sum_{1 \leq k < l \leq 2n+1} \tau(E_1(t)\cdots E_{k-1}(t)E_{k,1}(t)E_{l, 2}(t)E_{l+1}(t)\cdots E_{2n+1}(t)) \\ 
\tau(E_{k,2}(t)E_{k+1}(t) \cdots E_{l-1}(t)E_{l, 1}(t)). 
\end{multline*}
We shall specialize this formula to $E_j (t)= A_t = e^{t/2}PY_t$ when $j \geq 1$ is odd and $E_j(t) = B_t = e^{t/2}PY_t^{\star}$ otherwise, and use the free SDEs:
\begin{equation*}
dA_t = iA_tdX_t, \quad dB_t = -ie^{t/2}PdX_tY_t^{\star}. 
\end{equation*} 
As a matter of fact, we need to take into account the parity of $k,l$. Using the trace property of $\tau$, we readily get the following contributions:  
\begin{itemize}
\item $k,l$ have the same parity: 
\begin{equation*}
-\tau(\underbrace{A_tB_t\cdots A_tB_tA_t}_{2n-l+k +1\, \textrm{terms}})\tau(\underbrace{A_tB_t\cdots A_tB_t}_{l-k \, \textrm{terms}}). 
\end{equation*}
\item $k$ and $l$ do not have the same parity:  
\begin{equation*}
e^t\tau(\underbrace{A_tB_t\cdots A_tB_tA_t}_{2n-l+k\, \textrm{terms}})\tau(\underbrace{A_tB_t\cdots A_tB_t}_{l-k-1 \, \textrm{terms}}),
\end{equation*}
where an empty product equals $\tau(P) = \alpha$. 
\end{itemize}
If $(k,l)$ have the same parity then we set $l-k:=2q, 1\leq  q \leq n$ and for each fixed $q$, there are $(2n-2q+1)$ couples $(1 \leq k < l \leq 2n+1)$ such that $l-k = 2q$. Otherwise, we set $l - k = 2q-1, 1 \leq q \leq n$ and for fixed $q$ there are $(2n-2q+2)$ couples 
$(1 \leq k < l \leq 2n+1)$ such that $l-k = 2q-1$. Since 
\begin{equation*}
\tau(\underbrace{A_tB_t\cdots A_tB_tA_t}_{2n+1 \, \textrm{terms}}) = e^{(2n+1)t/2}s_{n,1}^{(\alpha)}(t), 
\end{equation*}
then 
\begin{equation*}
\frac{d}{dt}\tau(\underbrace{A_tB_t\cdots A_tB_tA_t}_{2n+1 \, \textrm{terms}}) = e^{(2n+1)t/2}\left[\frac{2n+1}{2}s_{n,1}^{(\alpha)}(t) + \frac{ds_{n,1}^{(\alpha)}}{dt}(t)\right],
\end{equation*}
and the proposition follows. 
\end{proof}
\begin{rem}
The previous proposition reduces for $n=1$ to:
\begin{align*}
\frac{ds_{1,1}^{(\alpha)}}{dt}(t) & = -\frac{3}{2}s_{1,1}^{(\alpha)}(t) - s_{0,1}^{(\alpha)}(t)r_1^{(\alpha)}(t) + 2s_{0,1}^{(\alpha)}(t)r_0^{(\alpha)}(t).
\end{align*}
Identifying $r_0^{(\alpha)}$ with $R_{0,0}^{(\alpha)}$ and since $R_{1,1}^{(\alpha)}(t) = e^t r_1^{(\alpha)}(t)$ and $s_{0,1}^{(\alpha)}(t) = \tau(PY_t) = \alpha e^{-t/2}$ then this ODE is in agreement with the one proved in Proposition \ref{ode} and satisfied by $R_{2,1}^{(\alpha)}(t) = e^{3t/2}s_{1,1}^{(\alpha)}(t)$. 
\end{rem}

Let 
\begin{equation*}
S_t^{(\alpha)}(z) := \sum_{n \geq 1}s_{n,1}^{(\alpha)}(t)z^n, \quad M_t^{(\alpha)}(z) := \sum_{n \geq 1}r_{n}^{(\alpha)}(t)z^n.
\end{equation*}
Then both series converge absolutely in the open unit disc and we readily deduce from Proposition \ref{ODE} the following linear pde:
\begin{cor}
The function $(t,z) \mapsto S_t^{(\alpha)}(z)$ satisfies the pde: 
\begin{multline*}
\partial_tS_t^{(\alpha)} = s_{0,1}(t)[2z(\alpha+M_t^{(\alpha)}) - M_t^{(\alpha)}] \\ + \left[ 2z(\alpha+M_t^{(\alpha)}) - M_t^{(\alpha)} - \frac{1}{2}\right]S_t^{(\alpha)} + [2z(\alpha+M_t^{(\alpha)})-(1+2M_t^{(\alpha)})]z\partial_zS_t^{(\alpha)}.
\end{multline*}
\end{cor}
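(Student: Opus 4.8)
The plan is to turn the scalar recursion of Proposition~\ref{ODE} into the asserted pde by the standard device of multiplying both sides by $z^n$ and summing over $n \geq 1$. On the left-hand side this produces $\partial_t S_t^{(\alpha)}(z)$ directly, since differentiation in $t$ commutes with the absolutely convergent sum in the open unit disc. The work is then to identify each of the three sums on the right-hand side as an explicit combination of $S_t^{(\alpha)}$, $z\partial_z S_t^{(\alpha)}$, $M_t^{(\alpha)}$ and the boundary data $s_{0,1}^{(\alpha)}$ and $r_0^{(\alpha)} = \alpha$.

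For the diagonal term I would use $\sum_{n\geq1} n\, s_{n,1}^{(\alpha)} z^n = z\partial_z S_t^{(\alpha)}$ to obtain $-\tfrac12\sum_{n\geq1}(2n+1)s_{n,1}^{(\alpha)}z^n = -z\partial_z S_t^{(\alpha)} - \tfrac12 S_t^{(\alpha)}$. For the two convolution sums I would reindex by $p := n-q$, turning each into a Cauchy product: the first becomes $-\bigl(\sum_{p\geq0}(2p+1)s_{p,1}^{(\alpha)}z^p\bigr)\bigl(\sum_{q\geq1}r_q^{(\alpha)}z^q\bigr)$, and the second, after relabelling $q-1$ as $q$, becomes $z\bigl(\sum_{p\geq0}(2p+2)s_{p,1}^{(\alpha)}z^p\bigr)\bigl(\sum_{q\geq0}r_q^{(\alpha)}z^q\bigr)$. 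Recognizing $\sum_{q\geq1}r_q^{(\alpha)}z^q = M_t^{(\alpha)}$ and $\sum_{q\geq0}r_q^{(\alpha)}z^q = \alpha + M_t^{(\alpha)}$, and using $\sum_{p\geq0}(2p+1)s_{p,1}^{(\alpha)}z^p = s_{0,1}^{(\alpha)} + 2z\partial_z S_t^{(\alpha)} + S_t^{(\alpha)}$ together with $\sum_{p\geq0}(2p+2)s_{p,1}^{(\alpha)}z^p = 2\bigl(s_{0,1}^{(\alpha)} + z\partial_z S_t^{(\alpha)} + S_t^{(\alpha)}\bigr)$, every sum becomes polynomial in $S_t^{(\alpha)}$, $z\partial_z S_t^{(\alpha)}$, $M_t^{(\alpha)}$, and $s_{0,1}^{(\alpha)}$.

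It then remains to collect the coefficients of $s_{0,1}^{(\alpha)}$, of $S_t^{(\alpha)}$, and of $z\partial_z S_t^{(\alpha)}$ and to check that they agree with $2z(\alpha+M_t^{(\alpha)})-M_t^{(\alpha)}$, with $2z(\alpha+M_t^{(\alpha)})-M_t^{(\alpha)}-\tfrac12$, and with $2z(\alpha+M_t^{(\alpha)})-(1+2M_t^{(\alpha)})$ respectively. This is a short bookkeeping exercise once the three summed contributions are written side by side.

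The only point requiring care --- and the place where the explicit $s_{0,1}^{(\alpha)}$ and $\alpha$ terms in the pde originate --- is the treatment of the boundary summands. Because $S_t^{(\alpha)}$ and $M_t^{(\alpha)}$ begin at index $1$, the $p=0$ contribution $s_{0,1}^{(\alpha)}$ in the reindexed $s$-series and the $q=0$ contribution $r_0^{(\alpha)}=\alpha$ in the $r$-series must be split off by hand rather than absorbed into the generating functions; keeping the weights $(2p+1)$ and $(2p+2)$ straight through this splitting is exactly what pins down the precise coefficients in the statement.
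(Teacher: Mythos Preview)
Your proposal is correct and is exactly the computation the paper has in mind: the corollary is stated without proof, merely as ``readily deduced from Proposition~\ref{ODE}'', and your multiply-by-$z^n$-and-sum argument, with the boundary terms $s_{0,1}^{(\alpha)}$ and $r_0^{(\alpha)}=\alpha$ split off by hand, is the intended derivation. The bookkeeping you outline checks out in full.
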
 
This pde may be written in a simpler form using elementary transformations. Indeed, straightforward computations shows that:
\begin{equation*}
V_t^{(\alpha)} (z) = \alpha + e^{t/2}S_t^{(\alpha)}(z), 
\end{equation*}
satisfies: 
\begin{equation}\label{PDE0}
\partial_tV_t^{(\alpha)} = \left[ 2z(\alpha+M_t^{(\alpha)}) - M_t^{(\alpha)}\right]V_t^{(\alpha)} + [2z(\alpha+M_t^{(\alpha)})-(1+2M_t^{(\alpha)})]z\partial_zV_t^{(\alpha)},
\end{equation}
with the initial condition: 
\begin{equation*}
V_0^{(\alpha)} (z) =   \frac{\alpha}{1-z}.
\end{equation*}
Note also that the `stationary' solution $V_{\infty}^{(\alpha)}$ corresponding to the previous pde satisfies: 
\begin{equation}\label{ODESTA}
\left[(2z(\alpha+M_{\infty}^{(\alpha)})- M_{\infty}^{(\alpha)}\right]V_{\infty}^{(\alpha)} + [2z(\alpha+M_{\infty}^{(\alpha)})-(1+2M_{\infty}^{(\alpha)})]z\partial_zV_{\infty}^{(\alpha)} = 0,
\end{equation}
where 
\begin{equation*}
 M_{\infty}^{(\alpha)}(z) = \lim_{t \rightarrow \infty}M_{t}^{(\alpha)}(z) =  \sum_{n \geq 1} \tau((PUPU^{\star})^n)z^n,
\end{equation*}
and $U$ is Haar unitary random variable. The explicit expression of the generating function $M_{\infty}^{(\alpha)}$ may be obtained using standard techniques from free probability (for instance compression by a free projection, \cite{Nic-Spe}) and is given by:
\begin{equation*}
 M_{\infty}^{(\alpha)}(z) = \frac{2\alpha-1+\sqrt{1-4\alpha(1-\alpha)z}}{\sqrt{1-z}} - \alpha, \quad |z| < 1.
\end{equation*}
Due to the complicated form of the pde satisfied by $V^{(\alpha)}$, we shall focus in the next paragraph on the particular case corresponding to $\alpha=1/2$. The restriction to this value is already present in the study of the free Jacobi process (\cite{DHH}, \cite{Ham}) and the simplicity of the computations in this case is justified at the operator-algebraic level by the fact that the symmetry $2P-{\bf 1}$ associated with $P$ has trace zero. At the analytical level, the method of characteristics allows to solve the pde in this case. 
\subsection{The case $\alpha = 1/2$}
Let 
\begin{equation*}
W_t^{(\alpha)}(z) := (1-z)V_t^{(\alpha)}, \quad W_0^{(\alpha)}(z) = \alpha,
\end{equation*}
and 
\begin{equation*}
N_t^{(\alpha)}:= \alpha + M_t^{(\alpha)}. 
\end{equation*}
Then, 
\begin{teo}
The following equality holds 
\begin{equation*}
\left[W_t^{(1/2)}(z)\right]^2 =  \frac{e^{t}(1-z)}{4z}\left[4(1-z)\left(N_t^{(1/2)}(z)\right)^2 - 1\right].
\end{equation*}
wherever both sides are analytic.  
\end{teo}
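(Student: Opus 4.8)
The plan is to verify the stated closed form by showing that both sides solve one and the same first-order linear evolution equation in the variable $z$, with the same initial datum, and then to conclude by uniqueness. Throughout I would fix $\alpha = 1/2$ and treat $N := N_t^{(1/2)}$ as a known function, governed by its own equation derived from \eqref{RR1}.

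First I would transform \eqref{PDE0} into an equation for $W := W_t^{(1/2)} = (1-z)V_t^{(1/2)}$. Writing $M = N - \alpha$ and setting $\alpha = 1/2$, \eqref{PDE0} reads $\partial_t V = [(2z-1)N + \tfrac12]V + 2z(z-1)N\,\partial_z V$. Substituting $V = W/(1-z)$ and $\partial_z V = (1-z)^{-1}\partial_z W + (1-z)^{-2}W$, and using $2z(z-1)/(1-z) = -2z$, the extra first-order contribution conspires with the zeroth-order term to give the clean equation
\[
\partial_t W = \left(\tfrac12 - N\right)W + 2z(z-1)N\,\partial_z W .
\]
I would stress here that the transport coefficient $2z(z-1)N$ is exactly the one occurring in the equation for $N$ below; this is the precise sense in which the characteristics are inherited from the dynamics of the spectral distribution of $Y_t$.

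Next I need the equation satisfied by $N$. Starting from the recurrence \eqref{RR1} and summing against $z^n$, the linear terms $-nr_n$ and $n\alpha r_{n-1}$ produce $-z\partial_z M$ and $\alpha(zN + z^2\partial_z M)$, while the convolution $\sum_q r_{n-q-1}(r_q - r_{q+1})$ is the $z^{n-1}$-coefficient of $M(z)A(z)$ with $A(z) = \alpha + M(z)(z-1)/z$; applying the operator $\sum_n n z^n(\,\cdot\,)_{n-1} = z\partial_z(z\,\cdot\,)$ and simplifying, all the purely algebraic contributions collapse to $zN^2$. Rewriting in $N = \alpha + M$ gives $\partial_t N = z[(2\alpha-1) + 2N(z-1)]\partial_z N + zN^2$, which at $\alpha = 1/2$ becomes
\[
\partial_t N = 2z(z-1)N\,\partial_z N + zN^2 .
\]
This convolution bookkeeping is the main computational obstacle: one must track the index shifts carefully and handle the contribution of $r_0 = \alpha$. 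I would sanity-check the result by matching the $z^1$-coefficient against $r_1^{(\alpha)}(t) = \alpha^2 + \alpha(1-\alpha)e^{-t}$.

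With both equations available, squaring the $W$-equation yields $\partial_t(W^2) = (1-2N)W^2 + 2z(z-1)N\,\partial_z(W^2)$, a first-order linear equation for $W^2$ with $N$ as a known coefficient. It then remains to check that the candidate $\Phi := \frac{e^t(1-z)}{4z}[4(1-z)N^2 - 1]$ solves this same equation: writing $\Phi = e^t\Psi$ and computing $\partial_t\Psi$ through the $N$-equation and $\partial_z\Phi$ directly, the two lowest-order pieces cancel because $N(1-z) + N(z-1) = 0$, and the cubic-in-$N$ terms cancel after using $(1-z) - 1 = -z$; the difference is identically zero. For the initial data, $W_0 = (1-z)V_0^{(1/2)} = \tfrac12$ gives $W_0^2 = \tfrac14$, while $N_0 = \tfrac{1}{2(1-z)}$ gives $\Phi|_{t=0} = \tfrac14$ as well; note that the bracket $4(1-z)N^2 - 1$ vanishes at $z=0$ since $N(t,0) = \tfrac12$ for every $t$, so $\Phi$ is analytic near the origin despite the factor $1/z$. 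Since $\{t=0\}$ is non-characteristic and both $W^2$ and $\Phi$ are analytic near $z=0$ with identical Cauchy data, the method of characteristics gives uniqueness, whence $W^2 = \Phi$ near the origin and then, by analytic continuation, wherever both sides are analytic.
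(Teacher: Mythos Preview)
Your approach is essentially the same as the paper's: derive the linear PDE \eqref{PDE1} for $W$, use the known PDE \eqref{PDE2} for $N$, pass to the equation for $W^2$, and verify that the candidate solves this same Cauchy problem, concluding by uniqueness. The only minor difference is that you re-derive the evolution equation for $N$ from the recurrence \eqref{RR1}, whereas the paper quotes it directly from \cite{DHH}; the paper also inserts a heuristic paragraph tracing the characteristics back to those of $\eta$ in order to motivate the closed form, but its rigorous argument is exactly the verification-plus-uniqueness you give.
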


\begin{proof}
Before proving this equality, let us explain, without much care about technical details and using the method of characteristics, the origin of its RHS. First of all, straightforward computations yield: 
\begin{equation*}
\partial_tW_t^{(1/2)} = -M_t^{(1/2)} W_t^{(1/2)} + z(z-1)(1+2M_t^{(1/2)})\partial_zW_t^{(1/2)},
\end{equation*}
or equivalently,
\begin{equation}\label{PDE1}
\partial_tW_t^{(1/2)} = \left(\frac{1}{2}-N_t^{(1/2)}\right) W_t^{(1/2)} + 2z(z-1)N_t^{(1/2)}\partial_zW_t^{(1/2)}. 
\end{equation}
Next, recall from \cite{DHH} that: 
\begin{align}\label{PDE2}
\partial_tN_t^{(1/2)} &=  z\partial_z\left[(z-1)(N_t^{(1/2)})^2\right] \nonumber
\\& =  z(N_t^{(1/2)})^2 + 2z(z-1) (N_t^{(1/2)}) \partial_z(N_t^{(1/2)}).
\end{align}
It follows that the characteristics of both pdes \eqref{PDE1} and \eqref{PDE2} satisfy locally the same ODE: 
\begin{equation}\label{ODECH}
z'(t) + 2z(z-1)N_t^{(1/2)}(z(t)) = 0, \quad z(0) := z_0, 
\end{equation}
therefore 
\begin{eqnarray*}
\frac{d}{dt}\left[W_t^{(1/2)}(z(t))\right] & = & \left(\frac{1}{2}-N_t^{(1/2)}(z(t))\right) W_t^{(1/2)}(z(t)) \\ 
& = &  \frac{1}{2}\left(1- \frac{z'(t)}{z(t)(1-z(t))}\right) W_t^{(1/2)}(z(t))
\end{eqnarray*}
is completely integrable. Actually, we have locally:  
\begin{equation*}
[W_t(z(t))]^2 = \frac{e^{t}}{4}\frac{(1-z(t))z_0}{z(t)(1-z_0)}, 
\end{equation*}
where this expression is understood as $[W_t(z(t))]^2 = 1/4$ when $z_0 = 0$ since then $z(t) = 0$. Now, recall from \cite{DHH} the expression: 
\begin{equation*}
N_t^{(1/2)}(z) = \frac{1}{2\sqrt{1-z}} \left[1+2\eta\left(2t,e^{-t}\frac{z}{(1+\sqrt{1-z})^2}\right)\right], \quad |z| <1, 
\end{equation*}
and introduce:
\begin{equation*}
y(t) = e^{-t} \lambda(z(t)), \quad \lambda(z) := \frac{z}{(1+\sqrt{1-z})^2} = \frac{1-\sqrt{1-z}}{1+\sqrt{1-z}}.
\end{equation*}
Then $\lambda$ is a biholomorphic map from the open unit disc to the slit plane $\mathbb{C} \setminus [1,\infty[$ satisfying:
\begin{equation*}
\lambda'(z) = \frac{\lambda(z)}{z\sqrt{1-z}}, 
\end{equation*}
whence the characteristic equation \eqref{ODECH} reduces to:
\begin{equation*}
y'(t) = 2y(t) \eta\left(2t, y(t)\right), \quad y(0) := y_0 = \lambda(z_0). 
\end{equation*}
Up to the time change $t \mapsto t/2$, this is the ODE satisfied by the characteristic curves of \eqref{PDEUBM} (though we do not know whether uniqueness holds for this ODE) along which 
\begin{equation*}
\eta(2t, y(t)) = \eta(0,y_0) = \frac{y_0}{1-y_0} \quad \Leftrightarrow \quad y_0 = \frac{\eta(2t, y(t))}{1+\eta(2t, y(t))} = y(t)e^{-2t\eta(2t, y(t))},
\end{equation*}
where the last equality follows from the functional relation (see e.g. \cite{Biane1}): 
\begin{equation}\label{FR}
\frac{\eta(2t, w)}{1+\eta(2t, w)} e^{2t\eta(2t, w)} = w , \quad |w| \leq e^{-t}. 
\end{equation}
In particular, we get: 
\begin{equation*}
y(t) = y_0 e^{2ty_0/(1-y_0)},  \quad \Rightarrow \quad z(t) = \lambda^{-1}\left(y_0 e^{t(1+y_0)/(1-y_0)}\right), 
\end{equation*}
and
\begin{align*}
z_0 = \lambda^{-1} (y_0) = \lambda^{-1}\left(\frac{\eta(2t, y(t))}{1+\eta(2t, y(t))}\right) = 1- \frac{1}{[1+2\eta(2t, e^{-t}\lambda(z(t)))]^2}.
\end{align*}
Consequently, 
\begin{equation*}
[W_t(z(t))]^2  =   \frac{e^{t}(1-z(t))[[1+2\eta(2t, e^{-t}\lambda(z(t)))]^2 - 1]}{4z(t)}
\end{equation*}
which suggests the expression of $[W_t(z(t))]^2$. Coming back to the proof of the theorem, it suffices to prove that the map 
\begin{equation*}
(t,z) \mapsto \frac{e^{t}(1-z)}{4z}\left[4(1-z)\left(N_t^{(1/2)}(z)\right)^2 - 1\right],
\end{equation*}
which is analytic $\mathbb{C} \setminus [1,\infty[$, solves the Cauchy problem: 
\begin{eqnarray*}
\partial_t(W_t^{(1/2)})^2 & = &(1-2N_t^{(1/2)})(W_t^{(1/2)})^2 + 2z(z-1)N_t^{(1/2)}\partial_z(W_t^{(1/2)})^2, \\ 
  (W_0^{(1/2)})^2(z) & = & \frac{1}{4},
\end{eqnarray*}
in a neighborhood of the origin. This is directly checked using the pde \eqref{PDE2} and the initial value: 
\begin{equation*}
4(1-z)\left(N_0^{(1/2)}(z)\right)^2 - 1 = [1+2\rho(0,\lambda(z))]^2 - 1 = \left(\frac{1+\lambda(z)}{1-\lambda(z)}\right)^2 - 1 = \frac{z}{1-z}.
\end{equation*}
\end{proof}

Using the expression of $N_t^{(1/2)}$, we can rewrite: 
\begin{align*}
\left[W_t^{(1/2)}(z)\right]^2 & = \frac{e^{t}(1-z)}{z}\left(\eta(2t,e^{-t}\lambda(z))\right)[1+ \eta(2t,e^{-t}\lambda(z))]
\end{align*}
or equivalently, 
\begin{align}\label{Flow}
\left[V_t^{(1/2)}(z)\right]^2 & = \frac{e^{t}}{z(1-z)}\left(\eta(2t,e^{-t}\lambda(z))\right)[1+ \eta(2t,e^{-t}\lambda(z))].
\end{align}
In particular, the power expansion of $\eta(2t, z)$ shows that:
\begin{equation*}
\left[V_{\infty}^{(1/2)}(z)\right]^2 = \lim_{t \rightarrow \infty} \left[V_t^{(1/2)}(z)\right]^2 = \frac{\lambda(z)}{z(1-z)} = \frac{1}{(1-z)(1+\sqrt{1-z})^2}, 
\end{equation*}
whence 
\begin{equation*}
V_{\infty}^{(1/2)}(z)  = \frac{1}{\sqrt{1-z}(1+\sqrt{1-z})}. 
\end{equation*}
Since 
\begin{equation*}
1+ 2M_{\infty}^{(1/2)}(z) = \frac{1}{\sqrt{1-z}} \quad \Leftrightarrow \quad  M_{\infty}^{(1/2)}(z) = \frac{z}{2}V_{\infty}^{(1/2)}(z)
\end{equation*}
then the expression of $V_{\infty}^{(1/2)}(z)$ displayed above is easily seen to satisfy \eqref{ODESTA} in the particular case $\alpha = 1/2$. 

On the other hand, we can use \eqref{FR} to express \eqref{Flow} as: 
\begin{align*}
\left[V_t^{(1/2)}(z)\right]^2 & = \frac{\lambda(z)}{z(1-z)}[1+ \eta(2t,e^{-t}\lambda(z))]^2e^{-2t \eta(2t,e^{-t}\lambda(z))}
\end{align*}
whence we infer that (at least locally around $z =0$):
\begin{align*}
V_t^{(1/2)}(z) & =  \frac{1}{\sqrt{1-z}(1+\sqrt{1-z})} [1+ \eta(2t,e^{-t}\lambda(z))]e^{-t \eta(2t,e^{-t}\lambda(z))}.
\end{align*}
With the help of this expression, we arrive at: 
\begin{cor}
Let 
\begin{eqnarray*}
c_0(t) &= & \frac{1}{2} \\ 
c_1(t) & = & \frac{1+(1-2t) e^{-t}}{2} \\ 
c_2(t) & = & \frac{(1-2t)e^{-t} - 2te^{-2t}}{2} \\ 
c_j(t) &=& -t e^{-jt} \sum_{k = 1}^{j-1}\frac{1}{k(j-k)}L_{k-1}^{(1)} (2kt)L_{j-k-1}^{(1)} (2(j-k+1)t) \\ 
 & - & t   e^{-(j-1)t} \sum_{k = 1}^{j-2}\frac{1}{k(j-k-1)}L_{k-1}^{(1)} (2kt)L_{j-k-2}^{(1)} (2(j-k)t), \quad j \geq 3. 
\end{eqnarray*}
Then 
\begin{equation*}
s_{n,1}(t) = \frac{e^{-t/2}}{4^n} \sum_{j=0}^n \binom{2n}{n-j}c_j(t). 
\end{equation*}
\end{cor}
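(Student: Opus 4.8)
The plan is to start from the closed expression for $V_t^{(1/2)}(z)$ obtained just above the corollary, namely
\begin{equation*}
V_t^{(1/2)}(z) = \frac{1}{\sqrt{1-z}(1+\sqrt{1-z})}\,[1+\eta(2t,e^{-t}\lambda(z))]\,e^{-t\eta(2t,e^{-t}\lambda(z))},
\end{equation*}
and to extract the coefficient of $z^n$ in $S_t^{(1/2)}(z) = e^{-t/2}(V_t^{(1/2)}(z)-1/2)$, since $s_{n,1}(t)$ is exactly that coefficient times $e^{-t/2}$ for $n\geq 1$ (and the stated $c_0(t)=1/2$ takes care of the constant term). The natural strategy is to change variables from $z$ to $w := e^{-t}\lambda(z)$, because $\eta(2t,\cdot)$ has a clean power series in its argument and the prefactor $1/[\sqrt{1-z}(1+\sqrt{1-z})]$ becomes an elementary function of $\lambda(z)$. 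Recalling $\lambda(z) = (1-\sqrt{1-z})/(1+\sqrt{1-z})$, one checks that $\sqrt{1-z} = (1-\lambda)/(1+\lambda)$ and hence the prefactor equals $(1+\lambda)^2/(2(1+\lambda)) \cdot$ (a factor from $\sqrt{1-z}$), which simplifies to a rational function of $\lambda$; likewise $z/(1-z)$ and the combination appearing in $V^2$ become transparent in the $\lambda$ variable.

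The main technical device I expect to use is a Lagrange-type inversion or a direct binomial expansion keyed to the identity that already produced the Laguerre polynomials earlier in the paper, namely $[\eta(\tau,\cdot)]^j = j\sum_{m\geq j} L_{m-j}^{(j)}(m\tau)\,\cdot^m/m$ from \cite{Dem}. The presence of products $L_{k-1}^{(1)}(2kt)L_{j-k-1}^{(1)}(2(j-k+1)t)$ in the claimed $c_j(t)$ strongly signals that one should first square (or otherwise expand) the bracketed factor $[1+\eta]e^{-t\eta}$ into a Cauchy product of two single-Laguerre generating series, so that the coefficient of a given power factors as a convolution $\sum_k L^{(1)}_{\bullet}L^{(1)}_{\bullet}$. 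Concretely, I would write $[1+\eta(2t,w)]e^{-t\eta(2t,w)}$, differentiate or use the functional relation \eqref{FR} to relate $we^{-2t\eta}$ back to $y_0 = \eta/(1+\eta)$, and then expand in $w$; the two separate sums in $c_j(t)$ (one carrying $e^{-jt}$, the other $e^{-(j-1)t}$) suggest the expansion naturally splits according to whether a factor of $w$ or of $(1+\eta)$ is absorbed, giving rise to two convolution terms with shifted indices $k(j-k)$ versus $k(j-k-1)$.

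After obtaining the coefficient of $z^n$ in $V_t^{(1/2)}$ as an $n$-fold expression, the final step is the appearance of the binomial weights $\binom{2n}{n-j}$ and the $4^{-n}$ normalization in $s_{n,1}(t) = e^{-t/2}4^{-n}\sum_{j=0}^n \binom{2n}{n-j}c_j(t)$. This factor is exactly the Taylor coefficient structure coming from the elementary generating function $1/[\sqrt{1-z}(1+\sqrt{1-z})]$ together with the substitution $w = e^{-t}\lambda(z)$: indeed $\lambda(z)^j$ expands with central binomial coefficients, since $\lambda(z) = z/(1+\sqrt{1-z})^2$ and powers of $\lambda$ generate the shifted Catalan/central-binomial weights $\binom{2n}{n-j}$. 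So I would isolate the purely combinatorial identity
\begin{equation*}
[z^n]\;\frac{(e^{-t}\lambda(z))^{\,j}}{\sqrt{1-z}\,(1+\sqrt{1-z})}\;\cdot(\text{entire functions of }t) = \frac{1}{4^n}\binom{2n}{n-j}\times(\cdots),
\end{equation*}
prove it once by a standard central-binomial coefficient computation (e.g. via $[z^n]\lambda(z)^j = \tfrac{j}{n}\binom{2n-j-1}{n-1}$ or an equivalent closed form), and then assemble everything.

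The hard part will be correctly matching the two-sum structure of $c_j(t)$: one must be careful that the expansion of $[1+\eta]e^{-t\eta}$ in powers of $w = e^{-t}\lambda(z)$ produces precisely the two convolution sums with the indicated index ranges $1\leq k\leq j-1$ and $1\leq k\leq j-2$ and the precise arguments $2kt$, $2(j-k+1)t$, $2(j-k)t$ of the Laguerre polynomials — the shift by one in the second argument (e.g. $j-k+1$ versus $j-k$) is exactly where the bookkeeping between the $e^{-jt}$ and $e^{-(j-1)t}$ terms becomes delicate, and verifying the small cases $c_0,c_1,c_2$ against the explicit values of $s_{0,1},s_{1,1}$ listed before Proposition \ref{ODE} (with $\alpha=1/2$) will be the essential consistency check that pins down all the constants.
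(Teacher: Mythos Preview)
Your approach is essentially the one the paper takes: rewrite the prefactor in terms of $\lambda(z)$, use a generating-function identity to produce the weights $4^{-n}\binom{2n}{n-j}$, and expand the factor $[1+\eta(2t,e^{-t}\lambda)]e^{-t\eta(2t,e^{-t}\lambda)}$ as a Cauchy product of two single-Laguerre series. Two points where your bookkeeping is slightly off and where the paper is sharper:

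First, the binomial weights: rather than compute $[z^n]\,\lambda(z)^j/[\sqrt{1-z}(1+\sqrt{1-z})]$ by hand, the paper observes that
\[
\frac{1}{\sqrt{1-z}(1+\sqrt{1-z})}=\frac{1+\lambda(z)}{2}\cdot\frac{1+\lambda(z)}{1-\lambda(z)},
\]
and then invokes the classical identity (Manocha--Srivastava): if $b_n=\sum_{k=0}^n\binom{2n}{n-k}c_k$ then $\sum_{n\ge0}b_n z^n/4^n=\frac{1+\lambda}{1-\lambda}\sum_{n\ge0}c_n\lambda^n$. This peels off the factor $\frac{1+\lambda}{1-\lambda}$ in one stroke and leaves the residual multiplier $\frac{1+\lambda}{2}$.

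Second, the source of the two sums in $c_j(t)$: it is \emph{not} ``whether a factor of $w$ or of $(1+\eta)$ is absorbed''. The Cauchy product $e^{-t\eta}\cdot(1+\eta)$ produces a \emph{single} convolution in each power of $\lambda$; the doubling into two sums with the $e^{-jt}$ versus $e^{-(j-1)t}$ prefactors comes from the leftover multiplier $\frac{1+\lambda}{2}$ above, which shifts the $\lambda$-coefficient by one. Likewise, the relevant expansion from \cite{Dem} is not the power formula $\eta^j=j\sum L^{(j)}_{m-j}(\cdot)$ (which would give $L^{(j)}$ rather than $L^{(1)}$) but the identity in the proof of Proposition~6.1 there, giving the exponential $e^{-2t\eta(2t,e^{-t}z)}$ directly as $1-2t\sum_{j\ge1}j^{-1}e^{-jt}L_{j-1}^{(1)}(2(j+1)t)z^j$; this, multiplied by $1+\eta(2t,e^{-t}z)=1+\sum_{k\ge1}k^{-1}e^{-kt}L_{k-1}^{(1)}(2kt)z^k$, is what yields the convolutions $\sum_k \frac{1}{k(j-k)}L_{k-1}^{(1)}L_{j-k-1}^{(1)}$ with parameter $1$.
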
 
\begin{proof}
Write 
\begin{equation*}
\frac{1}{\sqrt{1-z}(1+\sqrt{1-z})} = \frac{1+\lambda(z)}{2} \frac{1+\lambda(z)}{1-\lambda(z)}
\end{equation*}
so that 
\begin{align*}
V_t^{(1/2)}(z) & =  \frac{1+\lambda(z)}{1-\lambda(z)} \frac{1+\lambda(z)}{2}  [1+ \eta(2t,e^{-t}\lambda(z))]e^{-t \eta(2t,e^{-t}\lambda(z))}.
\end{align*}
Now recall from \cite{Man-Sri}, p.357, the following fact: if $(c_n)_{n \geq 0}, (b_n)_{n \geq 0}$ are two sequences of real numbers related by 
\begin{equation}\label{Conv}
b_n = \sum_{k=0}^n\binom{2n}{n-k}c_k,
\end{equation}
then 
\begin{equation}\label{Rela} 
\sum_{n \geq 0}b_n\frac{z^n}{4^n} = \frac{1+\lambda(z)}{1-\lambda(z)} \sum_{n \geq 0}c_n[\lambda(z)]^n 
\end{equation}
whenever both series converges absolutely. Accordingly if 
\begin{equation*}
V_t^{(1/2)}(z) := \frac{1}{2} + e^{t/2} \sum_{n \geq 1}s_{n,1}(t) z^n = \sum_{n \geq 0}b_n(t)\frac{z^n}{4^n}
\end{equation*}
and 
\begin{equation*}
\frac{1+z}{2}e^{-2t\eta(2t,e^{-t}z)}[1+ \eta(2t,e^{-t}z)] = \sum_{n \geq 0}c_n(t)z^n,
\end{equation*}
then relation \eqref{Rela} holds for the sequences $(b_n(t))_{n \geq 0}$ and $(c_n(t))_{n \geq 0}$ with $c_0(t)= b_0(t) = 1/2$. The latter sequence may be determined using results proved in \cite{Dem}. More precisely, the proof of Proposition 6.1 there shows that 
\begin{equation*}
e^{-2t\eta(2t,e^{-t}z)} = 1 - 2t \sum_{j \geq 1}\frac{e^{-jt}}{j} L_{j-1}^{(1)} (2(j+1)t) z^j
\end{equation*}
which in turn entails: 
\begin{multline*}
e^{-2t\eta(2t,e^{-t}z)}[1+ \eta(2t,e^{-t}z)] = 1 + (1- 2t) e^{-t}z -2t  \sum_{j \geq 2} e^{-jt} z^j\\ 
\sum_{k = 1}^{j-1}\frac{1}{k(j-k)}L_{k-1}^{(1)} (2kt)L_{j-k-1}^{(1)} (2(j-k+1)t).
\end{multline*}
Keeping in mind the factor $(1+z)/2$, one then extracts the coefficients $(c_j(t))_{j \geq 0}$ and applies \eqref{Conv}.
\end{proof}
\begin{rem}
Since $s_{k,1}(\infty) = 0$ for any $k \geq 0$, then \eqref{OddEven} is equivalent when $\alpha = 1/2$ to: 
\begin{multline*}
\sum_{k=1}^n\left[s_{k-1,1}^{(1/2)}(t) - s_{k-1,1}^{1/2)}(\infty)\right]\left[s_{n-k,1}^{(1/2)}(t) - s_{n-k,1}^{1/2)}(\infty)\right] = \\ 
\sum_{k=1}^n \left[r_{k}^{(1/2)}(t) - r_{k}^{(1/2)}(\infty)\right]\left[r_{n-k}^{(1/2)}(t) + r_{n-k}^{(1/2)}(\infty)\right].
\end{multline*}
We do not know whether this equality admits or not a combinatorial interpretation. 
\end{rem}

\begin{rem} 
At the analytic level, \eqref{OddEven} also implies that 
\begin{equation*}
\frac{d}{dt} \left\{\sum_{k=1}^n s_{k-1,1}^{(1/2)}(t)s_{n-k,1}^{(1/2)}(t) - \sum_{k=0}^n r_{k}^{(1/2)}(t)r_{n-k}^{(1/2)}(t)\right\} = 0. 
\end{equation*} 
Noting that 
\begin{align*}
\frac{d}{dt} \sum_{k=1}^n s_{k-1,1}^{(1/2)}(t)s_{n-k,1}^{(1/2)}(t) & = 2 \sum_{k=1}^n \frac{ds_{k-1,1}^{(1/2)}(t)}{dt}s_{n-k,1}^{(1/2)}(t)
 \end{align*}
and using Proposition \ref{ODE}, we get: 
\begin{align*}
\frac{d}{dt} \sum_{k=1}^n s_{k-1,1}^{(1/2)}(t)s_{n-k,1}^{(1/2)}(t) & =  -\sum_{k=1}^n(2k-1)s_{k-1}(t)s_{n-k}(t) - 2 \sum_{k=1}^n\sum_{q=1}^{k-1}(2k-2q-1)s_{k-q-1}(t) s_{n-k}(t) r_q(t) 
\\& + 2\sum_{k=1}^n\sum_{q=1}^{k-1}(2k-2q)s_{k-q-1}(t)s_{n-q}(t)r_{q-1}(t).
\end{align*}
Now, 
\begin{equation*}
-\sum_{k=1}^n(2k-1)s_{k-1}(t)s_{n-k}(t) = -n\sum_{k=1}^n s_{k-1}(t)s_{n-k}(t), 
\end{equation*}
\begin{align*}
2\sum_{k=1}^n\sum_{q=1}^{k-1}(2k-2q-1)s_{k-q-1}(t) s_{n-k}(t) r_q(t) & = 2\sum_{q=1}^n r_q(t) \sum_{k=q+1}^n (2k-2q-1)s_{k-q-1}(t) s_{n-k}(t)
\\& = 2\sum_{q=1}^{n-1} r_q(t) \sum_{k=0} ^{n-q-1} (2k+1)s_{k}(t) s_{n-q-k-1}(t)
\\& =  2\sum_{q=1}^{n-1} r_{n-q}(t) \sum_{k=0}^{q-1} (2k+1)s_{k}(t) s_{q-k-1}(t)
\\& = 2 \sum_{q=1}^{n-1} q r_{n-q}(t) \sum_{k=0}^{q-1} s_{k}(t) s_{q-k-1}(t).
\end{align*}
and similarly 
\begin{align*}
2\sum_{k=1}^n\sum_{q=1}^{k-1}(2k-2q)s_{k-q-1}(t) s_{n-k}(t) r_{q-1}(t) & = 2 \sum_{q=1}^{n-1} (q+1) r_{n-q-1}(t) \sum_{k=0}^{q-1} s_{k}(t) s_{q-k-1}(t).
\end{align*}
Consequently, 
\begin{align}\label{Derivee1}
\frac{d}{dt} \sum_{k=1}^n s_{k-1,1}^{(1/2)}(t)s_{n-k,1}^{(1/2)}(t) & = -n\sum_{k=1}^n s_{k-1}(t)s_{n-k}(t) - 2 \sum_{q=1}^{n-1} q r_{n-q}(t) \sum_{k=0}^{q-1} s_{k}(t) s_{q-k-1}(t) \nonumber
\\&  + 2 \sum_{q=1}^{n-1} (q+1) r_{n-q-1}(t) \sum_{k=0}^{q-1} s_{k}(t) s_{q-k-1}(t) \nonumber 
\\& =  2 \sum_{q=2}^{n} q r_{n-q}(t)\sum_{k=0}^{q-2} s_{k}(t) s_{q-k-2}(t) - 2\sum_{q=1}^n q r_{n-q}(t) \sum_{k=0}^{q-1} s_{k}(t) s_{q-k-1}(t).
\end{align}
where the second equality follows from $r_{0}(t) = 1/2$. 

On the other hand, write \eqref{RR1} with $\alpha=1/2$ in the following form: 
\begin{align*}
\frac{dr_n^{(1/2)}}{dt}(t) & = - \frac{1}{2} n r_{n}^{(1/2)}(t) + n \sum_{k=0}^{n-1} r_{n-k-1}^{(1/2)}(t) (r_{k}^{(1/2)}(t) - r_{k+1}^{(1/2)}(t)),
\\& = - \frac{1}{2} n r_{n}^{(1/2)}(t) + n \sum_{k=1}^{n} r_{n-k}^{(1/2)}(t) r_{k-1}^{(1/2)}(t) - n \sum_{k=1}^{n} r_{n-k}^{(1/2)}(t) r_{k}^{(1/2)}(t),
\end{align*}
and use it to get: 
\begin{align*}
\frac{d}{dt} \sum_{k=0}^n r_{k}^{(1/2)}(t)r_{n-k}^{(1/2)}(t) & =  - \sum_{k=0}^nkr_{k}^{(1/2)}(t)r_{n-k}^{(1/2)}(t) - 2 \sum_{k=0}^nk \sum_{q=1}^kr_{k-q}^{(1/2)}(t)r_{q}^{(1/2)}(t)r_{n-k}^{(1/2)}(t)
\\& + 2  \sum_{k=0}^nk \sum_{q=1}^kr_{k-q}^{(1/2)}(t)r_{q-1}^{(1/2)}(t)r_{n-k}^{(1/2)}(t).
\end{align*}
Moreover, 
\begin{align*}
- \sum_{k=0}^nkr_{k}^{(1/2)}(t)r_{n-k}^{(1/2)}(t) = -\frac{n}{2} \sum_{k=0}^nr_{k}^{(1/2)}(t)r_{n-k}^{(1/2)}(t) 
\end{align*}
and similarly, 
\begin{align*}
2 \sum_{k=0}^nk \sum_{q=1}^kr_{k-q}^{(1/2)}(t)r_{q}^{(1/2)}(t)r_{n-k}^{(1/2)}(t) & = 2 \sum_{q=1}^n r_{q}^{(1/2)}(t)\sum_{k=q}^nk r_{k-q}^{(1/2)}(t) r_{n-k}^{(1/2)}(t)
\\&= 2 \sum_{q=0}^{n-1} r_{n-q}^{(1/2)}(t)\sum_{k=0}^q(k+q) r_{k}^{(1/2)}(t) r_{q-k}^{(1/2)}(t)
\\& = 3\sum_{q=0}^{n-1} qr_{n-q}^{(1/2)}(t)\sum_{k=0}^q r_{k}^{(1/2)}(t) r_{q-k}^{(1/2)}(t),
\end{align*}
\begin{align*}
2\sum_{k=0}^nk \sum_{q=1}^kr_{k-q}^{(1/2)}(t)r_{q-1}^{(1/2)}(t)r_{n-k}^{(1/2)}(t) &= 2 \sum_{q=1}^n r_{q-1}^{(1/2)}(t)\sum_{k=q}^nk r_{k-q}^{(1/2)}(t) r_{n-k}^{(1/2)}(t)
\\&= 2 \sum_{q=0}^{n-1} r_{n-q-1}^{(1/2)}(t)\sum_{k=0}^q(k+q) r_{k}^{(1/2)}(t) r_{q-k}^{(1/2)}(t)
\\& = 3\sum_{q=0}^{n-1} qr_{n-q-1}^{(1/2)}(t)\sum_{k=0}^q r_{k}^{(1/2)}(t) r_{q-k}^{(1/2)}(t).
\end{align*}
Altogether 
\begin{align}\label{Derivee2}
\frac{d}{dt} \sum_{k=0}^n r_{k}^{(1/2)}(t)r_{n-k}^{(1/2)}(t) & =  - \frac{n}{2} \sum_{k=0}^nr_{k}^{(1/2)}(t)r_{n-k}^{(1/2)}(t)  - 3\sum_{q=0}^{n-1} qr_{n-q}^{(1/2)}(t)\sum_{k=0}^q r_{k}^{(1/2)}(t) r_{q-k}^{(1/2)}(t) \nonumber
\\& +3\sum_{q=0}^{n-1} qr_{n-q-1}^{(1/2)}(t)\sum_{k=0}^q r_{k}^{(1/2)}(t) r_{q-k}^{(1/2)}(t).
\end{align}
Formula \eqref{OddEven} states that \eqref{Derivee1} and \eqref{Derivee2} differ by a constant independent of $t$. 
\end{rem}

\subsection{The general case} 
The equality \eqref{Flow} takes the form: 
\begin{equation}\label{OddEven}
e^{-t}z\left[V_t^{(1/2)}(z)\right]^2 = \left(N_t^{(1/2)}(z)\right)^2 - \left(N_{\infty}^{(1/2)}(z)\right)^2
\end{equation}
which suggests to express $\left[V_t^{(\alpha)}(z)\right]^2$ through $\left[N_t^{(\alpha)}(z)\right]^2$. However, we can prove  that \eqref{OddEven} fails in general unless $\alpha = 1/2$. 

On the other hand, the characteristic curves $t \mapsto z^{(\alpha)}(t) \equiv z(t)$ of the pde \eqref{PDE0} are solutions of the ODE: 
\begin{equation*}
z'(t) + 2z(t)[z(t)-1] N_t^{(\alpha)}(z(t)) + (2\alpha-1)z(t) = 0.
\end{equation*}
Now, 
\begin{equation*}
N_{t}^{(\alpha)}(z) = \frac{2\alpha-1}{2(1-z)}  + \frac{H_t(\lambda(z))}{2\sqrt{1-z}}, \quad |z| < 1,
\end{equation*}
where $H_t$ is the Herglotz transform of the spectral distribution of the unitary operator: 
\begin{equation*}
RY_tRY_t^{\star}, \quad R := 2P-{\bf 1}. 
\end{equation*}
Consequently, the ODE satisfied by the characteristic curves reads: 
\begin{equation*}
z'(t) = z(t)\sqrt{1-z(t)}H_t(\lambda(z(t)).
\end{equation*}
Setting $y(t) = \lambda(z(t))$ then we have (at least for small $t$): 
\begin{equation*}
y'(t) = y(t)H_t(y(t)).
\end{equation*}
This ODE reminds the flow governing the spectral dynamics of $RY_tRY_t^{\star}$ and along which the holomorphic map
\begin{equation*}
K_t(y) = \sqrt{H_t(y)^2 - (2\alpha-1)^2\left(\frac{1+y}{1-y}\right)^2}, \quad |y| < 1, 
\end{equation*}
is constant (\cite{Ham1}). 

\subsection{The combinatorial approach} 
We close the paper by discussing the combinatorial approach to the odd alternating moments of $PY_tP$ in the particular case $\alpha =1/2$. Actually, the odd cumulants $k_{2j+1}(P, \dots, P)$ vanish except $k_1(P) = \tau(P)= 1/2$, so that 
the non zero contribution in the formula
\begin{equation*}
s_{n,1}^{(\alpha)}(t) = \sum_{\pi\in NC(2n+1)}k_{\pi}[P,...,P] \tau_{K(\pi)}[Y_t, Y_t^{\star}, \dots, Y_t, Y_t^{\star},Y_t],
\end{equation*}
comes only from partitions $\pi$ which contain an odd number of singletons together with blocks of even sizes. Consequently, such partition may be decomposed as: 
\begin{equation*}
\pi = 1^{2k+1} \cup \rho 
\end{equation*}
where $0 \leq k \leq n$ and $\rho$ exhausts the set $NCE(2n-2k)$ of even non crossing partitions (\cite{Nic-Spe}). Moreover, any block of the Kreweras complement $K(\pi)$ may contain only integers with the same parity and/or consecutive integers in which case cancellations occur due to the unitarity of $Y_t$. As a matter of fact, we get: 
\begin{equation*}
s_{n,1}^{(\alpha)}(t) = \sum_{k=0}^n\frac{1}{2^{2k+1}} \sum_{\rho \in NCE(2n-2k)} k_{\rho}[P,...,P]\tau_{K(1^{2k+1} \cup \rho)}[Y_t, Y_t^{\star}, \dots, Y_t, Y_t^{\star},Y_t].
\end{equation*}
Recalling the free cumulants:
\begin{equation*}
k_1(P) = \tau(P) = \frac{1}{2}, \quad k_{2j}(P,\dots, P) = \frac{(-1)^{j-1}}{2^{2j}} C_{j-1}, j \geq 1, 
\end{equation*}
where $C_j$ is the $j$-th Catalan number (see \cite{Nic-Spe}, Exercise 11.35), we are then left with the analysis of the block structure of $K(1^{2k+1} \cup \rho)$. However, this block structure not only depends on the number of singletons but also on their positions as shown by the following example: take $n=2, k=1,$ and consider the non-crossing partitions:
\begin{equation*}
\pi_1 = \{1\}\{2\}\{3\}\{4, 5\}, \quad \pi_2 = \{1\}\{2\}\{4\}\{3, 5\}
\end{equation*}
Then,
\begin{equation*}
K(\pi_1) = \{1,2, 3, 5\}\{4\}, \quad K(\pi_2) = \{1, 2, 5\}\{3, 4\}
\end{equation*}
whose contributions are respectively: 
\begin{equation*}
\tau_{K(\pi_1)}[Y_t, Y_t^{\star}, Y_t, Y_t^{\star},Y_t] = \tau(Y_t^2)\tau(Y_t), \quad \tau_{K(\pi_2)}[Y_t, Y_t^{\star}, Y_t, Y_t^{\star},Y_t] = \tau(Y_t).
\end{equation*}
It would be interesting to pursue these computations and to obtain as simple as possible formula for $s_{n,1}^{(\alpha)}(t)$.

\end{document}